\documentclass[12pt, a4paper]{article}
\usepackage[margin=2.5cm]{geometry}
\usepackage[utf8]{inputenc} 
\usepackage{lmodern}
\usepackage[T1]{fontenc}
\usepackage[english]{babel}

\usepackage[runin]{abstract}
\usepackage{titling}

\usepackage{amsmath}
\usepackage{amsthm}
\usepackage{amsfonts}
\usepackage{amssymb}
\usepackage{mathtools}
\usepackage{clrscode}
\usepackage{enumitem}
\usepackage{mdwlist}
\usepackage{color}

\usepackage{cite}

\setlength{\abstitleskip}{-\parindent}
\setlength{\absleftindent}{0pt}
\setlength{\absrightindent}{0pt}
\abslabeldelim{.}

\newcommand{\keywordsname}{Key words}
\makeatletter
\newcommand{\keywords}[1]{%
\def\thekeywords{#1}%
\begin{@bstr@ctlist}
\hspace*{\abstitleskip}{\abstractnamefont\keywordsname\@bslabeldelim}\abstracttextfont\
#1%
\par\end{@bstr@ctlist}
}
\makeatother

\newcommand{\subjclassname}{Mathematics subject classification}
\makeatletter
\newcommand{\subjclass}[2][2020]{%
\begin{@bstr@ctlist}
\hspace*{\abstitleskip}{\abstractnamefont\subjclassname\ (#1)\@bslabeldelim}\abstracttextfont\
#2%
\par\end{@bstr@ctlist}
}
\makeatother

\makeatletter
\def\and{
	\end{tabular}%
	and%
	\begin{tabular}[t]{c}}%
\makeatother

\makeatletter
\def\thanks#1{
\protected@xdef\@thanks{\@thanks
\protect\footnotetext[\the\c@footnote]{#1}}%
}
\makeatother

\makeatletter
\let\addresses\@empty      
\newcommand{\address}[2][]{\g@addto@macro\addresses{\address{#1}{#2}}}
\newcommand{\curraddr}[2][]{\g@addto@macro\addresses{\curraddr{#1}{#2}}}
\newcommand{\email}[2][]{\g@addto@macro\addresses{\email{#1}{#2}}}
\newcommand{\urladdr}[2][]{\g@addto@macro\addresses{\urladdr{#1}{#2}}}
%
%
\def\enddoc@text{
  \ifx\@empty\addresses \else\@setaddresses\fi}
\AtEndDocument{\enddoc@text}
\def\emailaddrname{E-mail address}
\def\@setaddresses{\par
  \nobreak \begingroup
%
%
  \interlinepenalty\@M
  \def\address##1##2{\begingroup%
    \par\addvspace\bigskipamount
    \@ifnotempty{##1}{(\ignorespaces##1\unskip) }%
    {\noindent\ignorespaces##2}\par\endgroup}%
%
%
  \def\email##1##2{\begingroup
    \@ifnotempty{##2}{\nobreak\noindent\emailaddrname
      \@ifnotempty{##1}{, \ignorespaces##1\unskip}\/:\space
      \ttfamily##2\par}\endgroup}%
%
%
  \addresses
  \endgroup
}

\makeatother


\makeatletter
\def\cstar#1{\expandafter\@cstar\csname c@#1\endcsname}
\def\@cstar#1{\ifcase#1\or $\ast$\or $\ast\ast$\or $\ast\ast\ast$\fi}
\AddEnumerateCounter{\cstar}{\@cstar}{$\ast\ast\ast$}
\makeatother

\newlist{conditions}{enumerate}{1}
\newlist{iconditions}{enumerate}{1}
\newlist{inthm}{enumerate}{1}
\setlist[conditions]{label=\normalfont(\alph*),ref=\normalfont(\alph*)}
\setlist[iconditions]{label=\normalfont(\roman*),ref=\normalfont(\roman*)}
\setlist[inthm]{label=\normalfont(\thetheorem.\arabic*),ref=\normalfont(\thetheorem.\arabic*),wide,labelindent=0pt}



\mathchardef\mhyphen="2D

\newcommand{\B}{\mathbb{B}}
\newcommand{\CB}{\mathbb{C}}
\newcommand{\R}{\mathbb{R}}
\newcommand{\C}{\mathcal{C}}
\newcommand{\FC}{\mathcal{F}}
\newcommand{\GC}{\mathcal{G}}

\newcommand{\OC}{\mathcal{O}}
\newcommand{\PC}{\mathcal{P}}
\newcommand{\SC}{\mathcal{S}}
\newcommand{\ZC}{\mathcal{Z}}

\newcommand{\mf}{\mathfrak{m}}


\newcommand{\Cinfty}{\C^{\infty}}

\DeclareMathOperator{\codim}{codim}

\DeclareMathOperator{\rank}{rank}

\newtheorem{theorem}{Theorem}[section]
\newtheorem{claim}[theorem]{Claim}
\newtheorem{proposition}[theorem]{Proposition}
\newtheorem{lemma}[theorem]{Lemma}
\theoremstyle{definition}
\newtheorem{notation}[theorem]{Notation}
\newtheorem{remark}[theorem]{Remark}
\theoremstyle{remark}
\newtheorem{case}{\indent Case}



\DeclarePairedDelimiter\abs{\lvert}{\rvert}%
\DeclarePairedDelimiter\norm{\lVert}{\rVert}%

\makeatletter
\let\oldabs\abs
\def\abs{\@ifstar{\oldabs}{\oldabs*}}
\let\oldnorm\norm
\def\norm{\@ifstar{\oldnorm}{\oldnorm*}}
\makeatother


\title{{Hartogs-type theorems in real algebraic geometry, I} }
\date{}
\author{Marcin Bilski, Jacek Bochnak \and Wojciech Kucharz}

\address{Marcin Bilski\\Institute of Mathematics\\Faculty of Mathematics and Computer
Science\\Jagiellonian University\\\L{}ojasiewicza 6\\30-348
Krak\'ow\\Poland}
\email{Marcin.Bilski@im.uj.edu.pl}

\address{Jacek Bochnak\\Le Pont de l'\'Etang 8\\1323
Romainm\^otier\\Switzerland}
\email{jack3137@gmail.com}

\address{Wojciech Kucharz\\Institute of Mathematics\\Faculty of Mathematics and Computer
Science\\Jagiellonian University\\\L{}ojasiewicza 6\\30-348
Krak\'ow\\Poland}
\email{Wojciech.Kucharz@im.uj.edu.pl}

\usepackage[pdftex, pdfauthor={M. Bilski, J. Bochnak and W. Kucharz}, pdftitle={\thetitle}]{hyperref}

\begin{document}
\maketitle
\thispagestyle{empty}

\begin{abstract}
Let $f:X\rightarrow\R$ be a function defined on a connected nonsingular real algebraic set $X$ in $\R^n$. We prove that regularity 
of $f$ can be detected on either algebraic curves or surfaces in $X.$  
If $\mathrm{dim}X \geq 2$ and 
$k$ is a positive integer, then $f$ is a regular function whenever the restriction $f|_C$\linebreak is a regular function for every algebraic curve 
$C$ in $X$ that is a $\mathcal{C}^k$ submanifold homeomorphic to the unit
circle and is either nonsingular or has precisely one singularity. Moreover, in the latter case, the singularity of $C$ is equivalent to the plane curve singularity defined by the equation $x^p=y^q$
for some primes $p<q.$ If $\mathrm{dim}X\geq 3,$ then $f$ is a regular function whenever the restriction $f|_S$ is a regular function for every nonsingular algebraic surface $S$ in $X$ that is 
homeomorphic to the unit 2-sphere.  
We also have suitable versions of these results {for $X$ not necessarily connected.} 
\end{abstract}
\keywords{real algebraic set; real regular function; rational function; real analytic function}
\subjclass{14P05 (primary); 26C15 (secondary)}

\section{Introduction and main results}\label{sec:1}

The purpose of the present paper is to prove that regularity of real-valued functions defined on a nonsingular real algebraic set $X$ in $\R^n$ can be detected on mildly singular algebraic curves or nonsingular algebraic surfaces in $X$. In some well-defined sense our results are optimal. All theorems announced in this section are proved in Section~\ref{sec:3}.

We refer to either \cite{bib4} or \cite{Man2020} for the general theory of real algebraic sets, real regular functions, and related topics. Unless explicitly stated otherwise, we consider $\R^n$ and all its subsets endowed with the Euclidean topology induced by the standard norm on $\R^n$.

Let $Z$ be an algebraic set in $\R^n$. 
The \emph{algebraic complexification} 
 of $Z$ is the smallest complex algebraic subset $Z^{\CB}$ 
of $\CB^n$ that contains $Z$ ($\R^n$ is viewed as a subset of $\CB^n$). Note that $Z^{\mathbb{C}}$ is nonsingular at every point of $Z$ if and only if $Z$ is nonsingular (in the sense of \cite{bib4}). Assume that $Z$ is nonsingular. 
For any real analytic map-germ $\varphi:(Z,a)\rightarrow(\R^m,b)$, the \emph{complexification} of $\varphi$ is the uniquely determined complex analytic map-germ $\varphi_{\CB}:(Z^{\CB},a)\rightarrow(\CB^m,b)$ whose restriction to $Z$ is equal to $\varphi.$

Let $X$ be an irreducible nonsingular real algebraic set in $\R^n$ of dimension $m\geq 2,$ and $C$ a real algebraic curve in $X.$ Let $(p,q)$ be a pair of primes, $p<q$,  and $$D_{p,q}:=\{(x,y,z_1,\ldots,z_{m-2})\in\R^m: x^p-y^q=0,z_1=\cdots=z_{m-2}=0\}.$$ We say that $C$ is of \textit{type} $(p,q)$ \textit{at} $a\in C$ if there exists a real analytic diffeomorphism-germ $\sigma:(X,a)\rightarrow(\R^m,0)$ whose complexification $\sigma_{\CB}$ maps the germ of $C^{\CB}$ at $a$ onto the germ of $D_{p,q}^{\CB}$ at $0\in\CB^m.$
We say that $C$ is a $(p,q)$-\textit{curve} if it has a unique singular point  $a$ and it is of type $(p,q)$ at $a.$ If $k$ is a nonnegative integer with $pk<q,$ then $D_{p,q}$ is a $\mathcal{C}^k$
submanifold of $\R^m,$ hence any $(p,q)$-curve in $X$ is a $\mathcal{C}^k$ submanifold.

\subsection{Main results (simplified versions)}\label{subsec:1.1}

The main two results of this paper are stated in a simplified form as Theorems \ref{th-1-1} and~\ref{th-1-2} below.

\begin{theorem}\label{th-1-1}
Let $k$ be a positive integer and let $f \colon X \to \R$ be a function defined on a connected nonsingular algebraic set $X$ in $\R^n$, with $\dim X \geq 2$. Then the following conditions are equivalent:\pagebreak[2]
\begin{conditions}
\item\label{th-1-1-a} $f$ is regular on $X$.

\item\label{th-1-1-b} For every algebraic curve $C$ in $X$ that is homeomorphic to the unit circle and is either nonsingular or a $(p,q)$-curve for some primes $p, q$ with $pk<q$ (so in particular is a $\mathcal{C}^k$ submanifold), the restriction $f|_C$ is a regular function.
\end{conditions}
\end{theorem}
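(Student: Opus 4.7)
The implication (a)$\Rightarrow$(b) is immediate, since the restriction of a regular function on $X$ to any algebraic subset of $X$ is itself regular. I therefore focus on the converse.

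The first step is to upgrade the hypothesis into a global smoothness statement. Through each point $a\in X$ and each tangent direction $v\in T_a X$ one can construct, by a real algebraic approximation argument, a nonsingular algebraic curve $C\subset X$ homeomorphic to the unit circle and tangent to $v$ at $a$. Since $f|_C$ is regular, hence $\Cinfty$, on every such curve, a Hartogs-type detection principle for $\C^k$-smoothness via restrictions to curves should yield $f\in\C^k(X)$.

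The main step is to promote $\C^k$-regularity to algebraic regularity by means of the $(p,q)$-curves. The crucial observation is that although a $(p,q)$-curve $C$ is topologically a circle and even a $\C^k$ submanifold of $X$, its algebraic complexification $C^{\CB}$ carries a nontrivial Puiseux cusp at the singular point $a$, of analytic type $D_{p,q}^{\CB}$. Regularity of $f|_C$ at $a$, expressed via the diffeomorphism $\sigma$ and the normalisation $t\mapsto(t^q,t^p,0,\ldots,0)$ of $D_{p,q}$, forces the single-variable function $t\mapsto f(\sigma^{-1}(t^q,t^p,0,\ldots,0))$ to be regular in $t$; this kills infinitely many non-polynomial Puiseux coefficients of $f\circ\sigma^{-1}$ at $0$. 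Varying the base point $a$, the diffeomorphism $\sigma$, and the pair $(p,q)$ among all primes with $pk<q$, these constraints combine to force the Taylor expansion of $f$ at every point to arise from a rational function with denominator non-vanishing there; a Noetherian/compactness step then globalises this to the conclusion that $f$ is regular on $X$. This part would presumably be packaged via the more general theorem stated in Section~\ref{sec:3}.

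The principal obstacle is constructing sufficiently rich families of $(p,q)$-curves. Through an arbitrary point $a\in X$ and with an arbitrary prescribed local cusp configuration, one must exhibit a $(p,q)$-curve that is simultaneously globally homeomorphic to the unit circle. This requires combining the local analytic normal form built into the definition of a $(p,q)$-curve with a global real algebraic approximation preserving the prescribed singularity type, together with the combinatorial argument verifying that the resulting family is separating enough to pin down the Taylor expansion of $f$ at every point.
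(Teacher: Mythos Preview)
Your first step contains a genuine error. You claim that regularity of $f$ along all nonsingular algebraic curves through every point and direction forces $f\in\C^k(X)$ via a ``Hartogs-type detection principle''. This fails: the paper's Remark~\ref{rem-1-8}\ref{rem-1-8-1} exhibits an explicit function on $\R^n$ whose restriction to \emph{every} nonsingular algebraic curve is regular (indeed, whose restriction to every $1$-dimensional real analytic submanifold is real analytic), yet which is not even locally bounded. Thus nonsingular test curves alone cannot produce any smoothness of $f$, not $\C^k$, not even continuity. Your entire second step then presupposes a Taylor or Puiseux expansion of $f\circ\sigma^{-1}$ at the cusp point, which you have not earned.

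The paper's route is structurally different and avoids this trap. It does \emph{not} attempt to extract smoothness from nonsingular curves. Instead it first uses the nonsingular curves in $\SC_1(U)$ to obtain a \emph{rational representation}: polynomials $G,H$ with $f=G/H$ on $U\setminus P$ and $\codim_U P\geq 2$ (Theorem~\ref{th-3-3}, quoted from \cite{bib22}). Only after $f$ is known to be a quotient of analytic functions off a small set does the argument turn to the $(p,q)$-curves. The key mechanism (Lemma~\ref{lem-2-8}, feeding into Proposition~\ref{prop-2-1}) is: if $g/h$ fails to extend analytically across a point, then the complexified zero locus of $h$ has a branch on which $g$ vanishes to some finite order $m_0$; any single faithful curve whose local equation lies in $\mf^{m_0}$ already witnesses non-extendability of $f$ along it. One then constructs such a curve in $\FC^k(U)$ by choosing primes $p,q$ with $m_0<p$ and $pk<q$ and perturbing the model $C_{p,q}$ appropriately. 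There is no need to vary $(p,q)$ over infinitely many pairs or to ``kill Puiseux coefficients'' one by one; a single well-chosen $(p,q)$-curve per bad point suffices. Analyticity plus the rational representation then give regularity via faithful flatness (Lemma~\ref{lem-3-1}).

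In short: the order of operations matters. Rational representation comes first and is what makes the local analysis at bad points tractable; your plan inverts this and founders on the counterexample.
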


Sharper results are contained in Theorem~\ref{th-1-4} (for $X=\R^n$) and Theorem~\ref{th-1-5} (which encompasses also the 
case of $X$ irreducible but not necessarily connected).  Theorems \ref{th-1-4} 
and \ref{th-1-5} are optimal in the sense made precise in Remark~\ref{rem-1-8}. As indicated in \ref{rem-1-8-1}, a real-valued 
function on $\R^n$, for $n \geq 2$, need not be regular (or even continuous) despite the fact that all its restrictions 
to nonsingular algebraic curves in $\R^n$ are regular. Theorems \ref{th-1-1}, \ref{th-1-4} and \ref{th-1-5} can be viewed 
as a rather surprising continuation of the research project undertaken in \cite{bib16}, whose principal aim has been 
a characterization of continuous real rational functions by their restrictions to algebraic curves or arcs of such curves. 
In Remark~\ref{rem-1-9} we briefly comment on the ({being in preparation}) second part of the present paper.

\begin{theorem}\label{th-1-2}
Let $f \colon X \to \R$ be a function defined on a connected nonsingular algebraic set $X$ in $\R^n$, with $\dim X \geq 3$. Then the following conditions are equivalent:
\begin{conditions}
\item\label{th-1-2-a} $f$ is regular on $X$.

\item\label{th-1-2-b} For every nonsingular algebraic surface $S$ in $X$ that is homeomorphic to the unit $2$-sphere, the restriction $f|_S$ is a regular function.
\end{conditions}
\end{theorem}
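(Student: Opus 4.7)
The implication \ref{th-1-2-a} $\Rightarrow$ \ref{th-1-2-b} is immediate, since the restriction of a regular function to any algebraic subset is again regular. For the converse, my plan is to reduce Theorem~\ref{th-1-2} to Theorem~\ref{th-1-1}. Fix a positive integer $k$ (for concreteness take $k=1$). Since $\dim X \geq 3 \geq 2$, Theorem~\ref{th-1-1} will apply to $X$ as soon as one shows, under \ref{th-1-2-b}, that the restriction $f|_C$ is regular for every algebraic curve $C \subset X$ that is homeomorphic to the unit circle and is either nonsingular or a $(p,q)$-curve with $pk < q$.

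The crucial reduction is the following: for every such $C$, construct a nonsingular algebraic surface $S \subset X$ homeomorphic to the unit $2$-sphere such that $C \subset S$. Given any such $S$, hypothesis \ref{th-1-2-b} says that $f|_S$ is regular, and hence $f|_C = (f|_S)|_C$ is regular as the restriction of a regular function on $S$ to an algebraic subset of $S$. Theorem~\ref{th-1-1} then yields regularity of $f$ on the whole of $X$.

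The whole content of the proof thus lies in the containment construction. I would first work smoothly: since $\dim X \geq 3$ and $C$ is a compact $1$-dimensional subset of the nonsingular manifold $X$ (smooth, or with a single plane-curve singularity), one can pick a smooth closed disk in $X$ containing $C$ inside a normal $2$-plane field along $C$, and cap it off inside a larger coordinate patch of $X$ to obtain a smooth $2$-sphere $\Sigma \subset X$ with $C \subset \Sigma$. Then $\Sigma$ should be replaced by a nonsingular algebraic surface $S \subset X$ homeomorphic to $S^2$ via algebraic approximation of smooth submanifolds in nonsingular real algebraic sets (cf.~\cite{bib4}), carried out relative to $C$ so that $C$ remains contained in $S$.

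The chief obstacle I expect is precisely this relative, containment-preserving approximation: one needs $C \subset S$ exactly, not merely up to a small perturbation, while keeping $S$ nonsingular and of topological type $S^2$. Local algebraic normal forms for $C$ (the linear form $x=0$ in the nonsingular case and $x^p=y^q$ at the singular point of a $(p,q)$-curve), together with a careful algebraic gluing and approximation argument away from $C$, should make this step work and thereby complete the reduction to Theorem~\ref{th-1-1}.
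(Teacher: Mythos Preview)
Your overall mechanism—place each test curve $C$ inside a nonsingular algebraic $2$-sphere $S\subset X$, invoke hypothesis \ref{th-1-2-b} to get $f|_C$ regular, and then appeal to the curve criterion—is exactly the device the paper uses. But your plan to apply Theorem~\ref{th-1-1} directly to $X$ has a genuine gap at the very first topological step. If $C\subset\Sigma$ with $\Sigma$ homeomorphic to $S^2$, then $C$ bounds a disk in $\Sigma$ and is therefore null-homotopic in $X$. A connected nonsingular $X$ with $\dim X\ge 3$ can perfectly well have $\pi_1(X)\neq 0$ (take a nonsingular real algebraic model of $S^1\times S^2$, or $\R\mathbb{P}^3$), and such $X$ contain nonsingular algebraic curves homeomorphic to the circle that lie in \emph{no} embedded $2$-sphere whatsoever, smooth or algebraic. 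Your ``normal $2$-plane field / cap off in a larger chart'' construction cannot produce $\Sigma$ for these $C$, so the global reduction to Theorem~\ref{th-1-1} fails.

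The paper sidesteps this by never applying the curve criterion to $X$ itself. For each point $a\in X$ it first manufactures, via a generic finite projection $\rho\colon X\to\R^m$, a family of small nonsingular algebraic $2$-spheres $S_a\subset X$ through $a$, each already of the form $\varphi^{-1}(0)$ for a $\Cinfty$ map $\varphi\colon X\to\R^{m-2}$ transverse to $0$. It then applies Theorem~\ref{th-1-5} with $U=S_a$: the required test curves $C\in\SC_1(S_a)\cup\FC^k(S_a)$ already sit in the smooth $2$-sphere $S_a=\varphi^{-1}(0)$, so the ``containment-preserving approximation'' you correctly flagged as the crux reduces to approximating $\varphi$ by a polynomial map $\psi$ with $C\subset\psi^{-1}(0)$. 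That is Lemma~\ref{lem-3-6}, and it works precisely because such $C$ are \emph{faithful}—a property you do not mention but which is the whole reason $(p,q)$-curves are chosen (cf.\ Remark~\ref{rem-1-8}\ref{rem-1-8-2}). Regularity of each $f|_{S_a}$ then feeds into \cite[Theorem~2]{bib6} to give real analyticity of $f$ near $a$; global regularity follows from the rational representation (Theorem~\ref{th-3-3}) together with Lemma~\ref{lem-3-1}.
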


This result also has sharper versions, Theorem~\ref{th-1-6} (for $X=\R^n$) and Theorem~\ref{th-1-7} (for $X$ not necessarily 
connected). Theorem~\ref{th-1-7} is a significant improvement upon \cite[Theorem~6.2]{bib16}. Theorems \ref{th-1-2}, \ref{th-1-6} 
and \ref{th-1-7} are algebraic analogs of the results obtained in \cite{bib6} for the real analytic category. However, the 
transition to the algebraic setting is not obvious at all and requires new methods. {The results of the present paper can be interpreted as real 
algebraic variants of the classical Hartogs theorem on separately holomorphic functions of several complex variables \cite{bib10}.}

Besides \cite{bib6, bib22}, the key 
ingredient in our proof of Theorem~\ref{th-1-7} is Theorem~\ref{th-1-5}  engaging $(p,q)$-curves introduced above. Let us briefly discuss the property of these curves, called faithfulness, that plays a decisive role in the proof of Theorem~\ref{th-1-7}.  The concept in question refers in fact to real algebraic sets of any dimension. Given a real algebraic set $Z$ in $\R^n$ and a point $b\in Z$, we regard the germ $Z_b$ of $Z$ 
at $b$ as a real analytic set-germ. The \emph{analytic complexification} of $Z_b$ is the smallest complex analytic set-germ 
at $b \in \CB^n$ that contains~$Z_b$, see \cite[pp.~91, 92]{bib25}. We say that $Z$ is \emph{faithful at $b$} if the analytic 
complexification of $Z_b$ is equal to the germ at $b$ of the algebraic complexification $Z^{\CB}$ of $Z$; we say that $Z$ is 
\emph{faithful} if it is faithful at each of its points. Here ``faithful'' replaces ``quasi-regular'' used in \cite{bib27}. 

{If $Z$
is additionally a topological manifold with all connected components of the same dimension, then $Z$ is faithful at $b$ if and only if the complex analytic
germ of $Z^{\CB}$ at $b$ is irreducible.} Clearly, $Z$ is faithful at each nonsingular point. In general, $Z$ need not be faithful at singular points 
(the irreducible algebraic curve $C$ in $\R^2$ given by the equation $x^4 - 2x^2 y - y^3 = 0$ has the only 
singular point at the origin and is not faithful; moreover, $C$ is a real analytic submanifold of $\R^2$). 

By \cite[Proposition~4]{bib26}, $Z$ is faithful at $b$ if and only if, in the ring of real analytic 
function-germs $(\R^n,b) \to \R$, the ideal of function-germs vanishing on $Z_b$ is generated by the germs 
at $b$ of polynomial functions $\R^n \to \R$ vanishing on $Z$. Thus, $Z$ is nonsingular if and only if $Z$ is faithful 
and a real analytic submanifold of $\R^n$, which is equivalent to being faithful and a $\Cinfty$ submanifold of $\R^n$ (by \cite[Chap.~VI, Proposition~3.11]{bib24}).

Let $C$ be a real algebraic curve in an irreducible nonsingular real algebraic subset $X$ of $\R^n$ with $\mathrm{dim}X=m\geq 2.$ Let $(p,q)$ be a pair of primes, $p<q.$ Then $C$ is a $(p,q)$-{curve} if and only if it has a unique singular point $a$, it is faithful at $a,$ and there exists a real analytic diffeomorphism-germ $\sigma:(X,a)\rightarrow(\R^m,0)$ such that $\sigma(C_a)$ is equal to the germ of $D_{p,q}$ at $0\in\R^m$.

\subsection{Main results (full generality)}\label{subsec:1.2}

Given a real-valued function $\alpha$ on some set $\Omega$, we denote by $\ZC(\alpha)$ the zero set of $\alpha$, that is, $\ZC(\alpha) = \{ x \in \Omega : \alpha(x) = 0 \}$.

It will be convenient to consider regular functions in a more general context than usual. Let $A$ be an arbitrary subset of $\R^n$. A function $f \colon A \to \R$ is said to be \emph{regular at a point $a \in A$} if there exist two polynomial functions $\varphi, \psi \colon \R^n \to \R$ such that $\psi(a) \neq 0$ and $f(x) = \frac{\varphi(x)}{\psi(x)}$ for all $x \in A \setminus \ZC(\psi)$; as expected, $f$ is said to be \emph{regular} on~$A$ if it is regular at every point in~$A$. Actually, assuming that $f$ is regular on $A$, one can find two polynomial functions $\Phi, \Psi \colon \R^n \to \R$ with
\begin{equation*}
    A \subset \R^n \setminus \ZC(\Psi) \quad \text{and} \quad f(x) = \frac{\Phi(x)}{\Psi(x)} \quad \text{for all } x \in A,
\end{equation*}
see the proof of \cite[Proposition~3.2.3]{bib4}.

A function {$g \colon A \to \R$} defined on a subset $A$ of $\R^n$ is said to be \emph{real analytic} if for every point $a \in A$ there exist an open neighborhood $U \subset \R^n$ of $a$ and a real analytic function $G \colon U \to \R$ (in the usual sense) such that $G$ and $g$ agree on $A \cap U$. Obviously, every regular function on $A$ is real analytic.

For integers $n$ and $d$, with $1 \leq d \leq n-1$, an algebraic set $\Sigma$ in $\R^n$ is called a \emph{Euclidean $d$-sphere} if it can be expressed as
\begin{equation*}
    \Sigma = \{x \in \R^n : \norm{x - x_0} = r\} \cap Q,
\end{equation*}
where $Q$ is an affine $(d+1)$-plane in $\R^n$, $x_0\in Q$, and $r>0$. A Euclidean $1$-sphere is also called a \emph{Euclidean circle}.

Next we define certain collections of real algebraic curves and surfaces in order to formulate our results in an optimal way. Let $\Pi$ denote the set of all prime numbers. For any positive integer $k,$ put
$$\Lambda^k:=\{(p,q)\in\Pi\times\Pi:pk<q\}.$$

\begin{notation}\label{not-1-3}
Let $X$ be an irreducible nonsingular algebraic set {in $\R^n,$ for $n\geq 2,$} and let $U$ be a nonempty open subset of $X$.

\begin{inthm}[widest=1.3.3]
\item\label{not-1-3-1} For an integer $d$, with $1 \leq d \leq \dim X -1$, we denote by $\SC_d(U)$ the collection of all $d$-dimensional irreducible nonsingular algebraic sets $S$ in $X$, contained in $U$, satisfying one of the following two conditions:

\begin{iconditions}
\item\label{not-1-3-i} If $U$ is connected, then $S$ is homeomorphic to the unit $d$-sphere.

\item\label{not-1-3-ii} If $U$ is disconnected, then $S$ has at most two connected components, each homeomorphic to the unit $d$-sphere.
\end{iconditions}

Only the cases $d=1$ and $d=2$ will be relevant.

\item\label{not-1-3-2} Given a positive integer $k$, we denote by $\FC^k(U)$ the collection of all $(p,q)$-curves $C$ in $X$ for $(p,q)\in\Lambda^k$ such that  $C$ is  contained in $U$ and is homeomorphic to the unit circle. Recall that such curves are $\mathcal{C}^k$ submanifolds of $U.$

\item\label{not-1-3-3} For any primes $p<q,$ define $$H_{p,q}(x,y):=x^p-y^q,\mbox{ }\mbox{ } E_{p,q}:=\mathcal{Z}(H_{p,q})=\{(x,y)\in\R^2:H_{p,q}(x,y)=0\}$$
and
$$F_{p,q}(x,y):=x^p-y^q+x^{2q}+y^{2q},\mbox{ }\mbox{ }C_{p,q}:=\mathcal{Z}(F_{p,q})=\{(x,y)\in\R^2:F_{p,q}(x,y)=0\}.$$ For $(a,b)\in\R^2,$ the translates $$(a,b)+E_{p,q}\mbox{ and }(a,b)+C_{p,q}$$ are algebraic curves in $\R^2$ defined by the equations $$H_{p,q}(x-a,y-b)=0\mbox{ and }F_{p,q}(x-a,y-b)=0,$$ respectively.

For any affine $2$-plane $Q$ in $\R^n,$ we choose once and for all an affine-linear isomorphism 
$\varphi_Q:Q\rightarrow\R^2.$ If $n=2,$ then $Q=\R^2$ and $\varphi_{\R^2}$ is chosen to be the identity map. Given a positive integer $k,$ we denote by $\mathcal{H}^k(\R^n)$ (resp. $\mathcal{G}^k(\R^n))$ the collection of all algebraic curves $C$ in $\R^n$ for which there exist an affine $2$-plane $Q$ in $\R^n$ containing $C$ and a pair of primes $(p,q)\in\Lambda^k$ such that $\varphi_Q(C)$ is a translate of $E_{p,q}$ (resp. $C_{p,q}$). In particular, $\mathcal{H}^k(\R^2)$ (resp. $\mathcal{G}^k(\R^2)$) is the collection of all translates of the curves $E_{p,q}$ (resp. $C_{p,q}$) for $(p,q)\in\Lambda^k.$ The curves in $\mathcal{H}^k(\R^2)$ and in $\mathcal{G}^k(\R^2)$ are therefore given by explicit and simple polynomial equations. By Lemmas~\ref{andiffeo} 
and \ref{lem-2-2}, $\mathcal{G}^k(\R^n)\subset\mathcal{F}^k(\R^n).$
\end{inthm}
\end{notation}

Here is a characterization of regular functions on $\R^n$, for $n \geq 2$.

\begin{theorem}\label{th-1-4}
Let $k,n$ be two integers, $k \geq 1$, $n\geq 2$. Then, for a function $f \colon \R^n \to \R$, the following conditions are equivalent:

\begin{conditions}
\item\label{th-1-4-a} $f$ is regular on $\R^n$.

\item\label{th-1-4-b} The restriction of $f$ to every irreducible algebraic curve in $\R^n$ is a regular function.

\item\label{th-1-4-c} The restriction of $f$ to every algebraic curve, which is either a Euclidean circle in~$\R^n$ or is in the collection $\GC^k(\R^n)$, is a regular function.

\item\label{th-1-4-d} The restriction of $f$ to every Euclidean circle in $\R^n$ is a regular function, and the restriction of $f$ to every algebraic curve in the collection $\GC^k(\R^n)$ is a real analytic function.

\item\label{th-1-4-e} The restriction of $f$ to every algebraic curve, which is either an affine line parallel to one of the coordinate axes of $\R^n$ or is in the collection $\mathcal{H}^k(\R^n),$  is a regular function.

\item\label{th-1-4-f} The restriction of $f$ to every affine line parallel to one of the coordinate axes of $\R^n$ is a regular function, and the restriction of $f$ to every algebraic curve in the collection $\mathcal{H}^k(\R^n)$ is a real analytic function.

\end{conditions}
\end{theorem}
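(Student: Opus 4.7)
The implications $(a) \Rightarrow (b) \Rightarrow (c) \Rightarrow (d)$, $(b) \Rightarrow (e) \Rightarrow (f)$, and $(a) \Rightarrow (e)$ are immediate: a regular function on $\R^n$ restricts to a regular function on every algebraic subset, every regular function is real analytic, and each curve appearing in conditions (c)--(f) is an irreducible algebraic curve in $\R^n$. The work therefore comes down to the nontrivial implications $(d) \Rightarrow (a)$ and $(f) \Rightarrow (a)$.

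For $(f) \Rightarrow (a)$, my plan is to prove first that $f$ is real analytic on all of $\R^n$ via a Hartogs-type argument, and then upgrade real analyticity to regularity. Slicing $\R^n$ by coordinate 2-planes reduces the analyticity question to $n=2$, where the hypothesis supplies regularity (hence real analyticity) of $f$ along every horizontal and vertical line, together with real analyticity of $f$ along every translate of every curve $E_{p,q}$ with $(p,q)\in \Lambda^k$. Translating the singular vertex of $E_{p,q}$ to an arbitrary point $(a,b)\in\R^2$, one invokes the faithfulness of $E_{p,q}$: its analytic complexification at the vertex agrees with the complex curve $\{x^p=y^q\}$, so real-analytic data along $E_{p,q}$ complexifies to holomorphic data along a bona fide complex analytic curve transversal to the coordinate directions. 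A classical complex-analytic Hartogs argument on the complexified slice, using separate complexifiability in each coordinate and holomorphy on the transverse $(p,q)$-curve, forces $f$ to be real analytic near $(a,b)$, and hence on all of $\R^n$. Once analyticity is secured, the regularity of $f$ on each axis-parallel line gives separate rationality in each variable with uniformly controlled denominators; a standard argument (as in \cite{bib16}) then shows that a jointly real analytic, separately rational function on $\R^n$ is globally rational, and since $f$ is real-valued on all of $\R^n$ its denominator has no real zero, so $f$ is regular.

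For $(d) \Rightarrow (a)$, the blueprint is identical, with Euclidean circles playing the role of axis-parallel lines and $\GC^k(\R^n)$-curves playing the role of $\HC^k(\R^n)$-curves in the same Hartogs-type argument. The compactness of the curves in $\GC^k(\R^n)$, guaranteed by the majorant $x^{2q}+y^{2q}$ in $F_{p,q}$, makes the complexification step slightly more convenient: a real-analytic function on a compact faithful $(p,q)$-curve $C$ extends holomorphically near $C^{\CB}$, and by faithfulness this extension is meromorphic on the projective closure of $C^{\CB}$, hence algebraic. This upgrades ``real analytic on $\GC^k(\R^n)$-curves'' to ``regular on $\GC^k(\R^n)$-curves'', reducing (d) to (c); since $\GC^k(\R^n)\subset\FC^k(\R^n)$ and Euclidean circles lie in $\SC_1(\R^n)$, one then concludes by applying Theorem~\ref{th-1-5} with $X=\R^n$.

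The principal obstacle is the Hartogs-type step itself: transferring real analyticity from a family of 1-dimensional algebraic test curves --- some singular --- to the ambient $\R^n$. Classical Hartogs is a theorem about holomorphic functions of several complex variables, so to use it here one must first convert the real-analytic hypotheses along $(p,q)$-curves into honest holomorphic data along complex analytic curves. This is precisely what faithfulness of $E_{p,q}$ and $C_{p,q}$ provides, and mastering the patching of such local holomorphic extensions across slices is the heart of the proof.
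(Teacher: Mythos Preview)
Your plan has a structural gap in the analyticity step. You describe a ``classical complex-analytic Hartogs argument'' that would take separate analyticity along horizontal and vertical lines together with analyticity along a single translated $(p,q)$-curve and output joint analyticity at the vertex. No such theorem exists: real analyticity of $f$ along one real curve $E_{p,q}$ through $(a,b)$ gives you a holomorphic extension only in a thin neighborhood of the complexified curve, not enough to patch with the line data into a function holomorphic on a polydisc. The paper's actual mechanism is quite different and operates in the opposite order. One first uses \emph{only} the circles (Theorem~\ref{th-3-2}) or axis-parallel lines (Theorem~\ref{newproof}) to obtain a global rational representation $f=G/H$ with $\codim \ZC(H)\ge 2$; in dimension~$2$ this already makes $f$ real analytic off a finite set. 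Only then do the $(p,q)$-curves enter, via Lemma~\ref{lem-2-8}, which says: if $f=g/h$ near an isolated zero of $h$ and $f$ does \emph{not} extend analytically there, then for some $m_0$ every analytic germ $F\in\mf_{\R,2}^{m_0}$ with $I(\ZC(F))=(F)\OC_2^{\R}$ yields a curve along which $f$ fails to extend. Choosing $(p,q)\in\Lambda^k$ with $p>m_0$ and $F=F_{p,q}$ (resp.\ $H_{p,q}$) produces a curve in $\GC^k$ (resp.\ $\HC^k$) contradicting the hypothesis. So the $(p,q)$-curves detect poles of an already-meromorphic function; they do not tie together separate analyticity.

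Two further points. Your proposed reduction of (d) to (c) by upgrading ``real analytic on $C\in\GC^k(\R^n)$'' to ``regular on $C$'' is false: a real analytic function on a compact real algebraic curve need not be regular (take $e^x$ on the unit circle). And even granting (c), invoking Theorem~\ref{th-1-5} would require regularity on \emph{every} curve in $\SC_1(\R^n)\cup\FC^k(\R^n)$, whereas (c) only controls the proper subfamilies of Euclidean circles and $\GC^k(\R^n)$; the inclusion goes the wrong way for that deduction.
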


In the general setting we have the following result.

\begin{theorem}\label{th-1-5}
Let $k$ be a positive integer, $X$ an irreducible nonsingular algebraic set in $\R^n$, with $\dim X \geq 2$, and $U$ a nonempty open subset of $X$. Then, for a function $f \colon U \to \R$, the following conditions are equivalent:

\begin{conditions}
\item\label{th-1-5-a} $f$ is regular on $U$.

\item\label{th-1-5-b} The restriction of $f$ to every irreducible algebraic curve in $X$, contained in $U$, is a regular function.

\item\label{th-1-5-c} The restriction of $f$ to every algebraic curve, which is either in the collection $\SC_1(U)$ or in the collection $\FC^k(U)$, is a regular function.

\item\label{th-1-5-d} The restriction of $f$ to every algebraic curve in the collection $\SC_1(U)$ is a regular function, and the restriction of $f$ to every algebraic curve in the collection $\FC^k(U)$ is a real analytic function.
\end{conditions}
\end{theorem}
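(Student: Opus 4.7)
The implications $\ref{th-1-5-a}\Rightarrow\ref{th-1-5-b}\Rightarrow\ref{th-1-5-c}\Rightarrow\ref{th-1-5-d}$ are immediate: restriction preserves regularity, and every regular function is real analytic. The content of the theorem is $\ref{th-1-5-d}\Rightarrow\ref{th-1-5-a}$.

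The plan is to deduce $\ref{th-1-5-d}\Rightarrow\ref{th-1-5-a}$ from Theorem~\ref{th-1-4} by working in local real analytic coordinates. Fix $a\in U$ and set $m=\dim X$. Choose a real analytic diffeomorphism-germ $\sigma:(X,a)\to(\R^m,0)$. I first aim to show that $f$ is real analytic at $a$. For each affine $2$-plane $P\subset\R^m$ through $0$ and each pair $(p,q)\in\Lambda^k$, take a translate of $C_{p,q}\subset P$ small enough to sit inside a fixed coordinate neighborhood of $0$. Then the germ at $a$ of the preimage under $\sigma$ satisfies the infinitesimal definition of a $(p,q)$-curve in $X$ (it is faithful at $a$, and an analytic diffeomorphism-germ sends its complexification onto that of $D_{p,q}$). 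The crucial step is to globalize this germ to a bona fide curve $C\in\FC^k(U)$ homeomorphic to the unit circle. Granted such a $C$, hypothesis \ref{th-1-5-d} yields real analyticity of $f|_C$, which transfers through $\sigma$ to real analyticity of $f\circ\sigma^{-1}$ along the model curve. A parallel (and easier) realization promotes small Euclidean circles near $0$ to elements of $\SC_1(U)$, giving regularity of $f\circ\sigma^{-1}$ along them. Condition \ref{th-1-4-d} of Theorem~\ref{th-1-4}, applied in a small ball around $0$, then forces $f\circ\sigma^{-1}$ to be regular there, so $f$ is real analytic at $a$.

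Since $a\in U$ was arbitrary, $f$ is real analytic on all of $U$. To upgrade analyticity to regularity, I invoke \ref{th-1-5-d} on $\SC_1(U)$: for every nonsingular circle-like curve $C\in\SC_1(U)$, the restriction $f|_C$ is regular, hence expressible as $\Phi_C/\Psi_C$ on $C$ with $\Psi_C$ nowhere zero on $C$. A sufficiently rich family of such curves sweeps out a Zariski dense portion of $U$, so a standard argument (compare \cite{bib16}) forces the real analytic function $f$ to coincide with the restriction to $U$ of a rational function on $\R^n$ whose denominator vanishes nowhere on $U$; that is, $f$ is regular on $U$.

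The main obstacle is the realization step in the middle paragraph: producing a globally algebraic $(p,q)$-curve in $X$, necessarily homeomorphic to $S^1$, whose germ at $a$ coincides with the prescribed local model pulled back by $\sigma$. This is where the faithfulness property of $(p,q)$-curves, together with the algebro-geometric machinery of the complexification $X^{\CB}$, carries the essential weight: faithfulness identifies the complex analytic germ of the desired curve with the germ of its algebraic complexification, so one can cut out a global algebraic representative in $X^{\CB}$ and then control its real topology using the $\C^k$ submanifold structure guaranteed by $(p,q)\in\Lambda^k$, producing a circle-type algebraic curve in $U$ with the right germ at $a$.
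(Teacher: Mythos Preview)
There is a genuine gap, and it lies exactly where you locate it: the globalization step. You want, for each model curve in $\GC^k(\R^m)$ (or each Euclidean circle) sitting in the chart, an algebraic curve $C\subset X$ in $\FC^k(U)$ (or $\SC_1(U)$) whose germ at $a$ \emph{coincides} with the $\sigma$-preimage of that model. But an arbitrary real analytic curve-germ in $X$ has no reason to extend to an algebraic curve at all; one can only approximate, and approximation perturbs the germ. Your appeal to faithfulness is inverted: faithfulness is a property one verifies \emph{after} an algebraic curve has been produced (it says the germ of $C^{\CB}$ equals the analytic complexification of $C_a$), not a mechanism for manufacturing an algebraic curve out of a prescribed analytic germ. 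So the sentence ``one can cut out a global algebraic representative in $X^{\CB}$'' does not correspond to any actual construction.

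This matters because your reduction to Theorem~\ref{th-1-4} needs the germ to match exactly: condition~\ref{th-1-4-d} there speaks of specific curves (translates of $C_{p,q}$, Euclidean circles), and if the globalized $C$ only has a germ \emph{close} to the model, then $\sigma(C)$ is not one of those curves and Theorem~\ref{th-1-4} does not apply. The paper avoids this trap by never trying to hit a prescribed germ. It first secures a rational representation $f=G/H$ off a codimension-$2$ set via Theorem~\ref{th-3-3} (using only $\SC_1(U)$), and then, in the surface case, invokes Proposition~\ref{prop-2-1}: the contrapositive Lemma~\ref{lem-2-8} shows that if $f$ fails to be analytic at $a$, then \emph{any} curve whose defining germ lies in a sufficiently high power of the maximal ideal already detects the failure, so an explicit polynomial perturbation $W_r$ of $F_{p,q}$ (built with Weierstrass approximation of the chart) yields a global algebraic curve in $\FC^k(U)$ on which $f|_C$ is not analytic. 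For $\dim X\ge 3$ the paper slices $X$ by algebraic $2$-spheres coming from a Noether-normalization-type projection (Lemma~\ref{lem-2-13}), reduces to the surface case, and finishes with the analytic criterion of \cite{bib6}. The upgrade from analytic to regular then follows from Theorem~\ref{th-3-3} together with Lemma~\ref{lem-3-1} (and Lemma~\ref{lem-3-4} when $U$ has infinitely many components). Your final paragraph gestures at this last step, but the core analytic step cannot be outsourced to Theorem~\ref{th-1-4} in the way you propose.
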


By the definition of the collections $\SC_1(U)$ and $\FC^k(U)$, Theorem~\ref{th-1-5} is both more general and stronger than Theorem~\ref{th-1-1}. Taking $U=X$ would not simplify the proof of Theorem~\ref{th-1-5} in a significant way. In Theorems \ref{th-1-4} and \ref{th-1-5}, the implications (a)$\Rightarrow$(b)$\Rightarrow$(c)$\Rightarrow$(d) and (a)$\Rightarrow$(e)$\Rightarrow$(f) are obvious; we prove the implications (d)$\Rightarrow$(a) and (f)$\Rightarrow$(a) in Section~\ref{sec:3}. Let us note that deducing from (d) or (f) continuity of $f$, which is seemingly a much simpler task, is not at all obvious.

Regular functions on $\R^n$, with $n \geq 3$, can also be characterized as follows.

\begin{theorem}\label{th-1-6}
For a function $f \colon \R^n \to \R$, with $n \geq 3$, the following conditions are equivalent:

\begin{conditions}
\item\label{th-1-6-a} $f$ is regular on $\R^n$.

\item\label{th-1-6-b} The restriction of $f$ to every Euclidean $2$-sphere in $\R^n$ is a regular function.
\end{conditions}
\end{theorem}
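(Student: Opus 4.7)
My plan is to deduce Theorem~\ref{th-1-6} from Theorem~\ref{th-1-4} by a short geometric reduction. The implication (a)$\Rightarrow$(b) is immediate, so I focus on (b)$\Rightarrow$(a).

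First I would establish that $f$ is real analytic on $\R^n$. Every Euclidean $2$-sphere $\Sigma\subset\R^n$ is a compact real analytic submanifold, and by hypothesis $f|_\Sigma$ is regular, hence real analytic. Applying the real-analytic Hartogs-type theorem for Euclidean $2$-spheres proved in \cite{bib6} then yields real analyticity of $f$ on all of $\R^n$.

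Next, given any Euclidean circle $C=\{x\in\R^n:\|x-x_0\|=r\}\cap Q$ in $\R^n$, with $Q$ an affine $2$-plane and $x_0\in Q$, I pick an affine $3$-plane $Q'$ with $Q\subset Q'\subset\R^n$, which is possible because $n\geq 3$. Then $\Sigma:=\{x\in\R^n:\|x-x_0\|=r\}\cap Q'$ is a Euclidean $2$-sphere containing $C$, so $f|_C=(f|_\Sigma)|_C$ is regular. Now fix any positive integer $k$. By the observation just made, $f|_C$ is regular for every Euclidean circle $C$ in $\R^n$; and since $f$ is real analytic on the ambient $\R^n$, its restriction to every curve $C\in\GC^k(\R^n)$ is real analytic. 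Hence condition (d) of Theorem~\ref{th-1-4} is satisfied, so $f$ is regular on $\R^n$.

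The main obstacle is the first step: bootstrapping the algebraic regularity hypothesis on Euclidean $2$-spheres to real analyticity of $f$ on the ambient $\R^n$. This is precisely where the heavy lifting of the real-analytic sphere-Hartogs theorem from \cite{bib6} is used. Once that analytic input is in hand, the remaining geometric observation (every Euclidean circle sits in a Euclidean $2$-sphere when $n\geq 3$) combined with the already-proved Theorem~\ref{th-1-4} closes the argument.
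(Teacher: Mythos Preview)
Your proposal is correct and follows essentially the same approach as the paper: both invoke \cite[Theorem~2]{bib6} to obtain real analyticity of $f$, then use the observation that every Euclidean circle in $\R^n$ (for $n\geq 3$) lies on a Euclidean $2$-sphere to conclude regularity on circles. The only minor difference is in the final step: the paper applies Theorem~\ref{th-3-2} and Lemma~\ref{lem-3-1} directly (analytic $+$ rational representation $\Rightarrow$ regular), whereas you route through Theorem~\ref{th-1-4}\ref{th-1-4-d}$\Rightarrow$\ref{th-1-4-a}, which internally uses those same ingredients and also re-establishes analyticity; your path is therefore slightly less direct but equally valid.
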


As an application of Theorem~\ref{th-1-5}, we will obtain the following.

\begin{theorem}\label{th-1-7}
Let $X$ be an irreducible nonsingular algebraic set in $\R^n$, with $\dim X \geq 3$, and let $U$ be a nonempty open subset of $X$. Then, for a function $f \colon U \to \R$, the following conditions are equivalent:

\begin{conditions}
\item\label{th-1-7-a} $f$ is regular on $U$.

\item\label{th-1-7-b} The restriction of $f$ to every algebraic surface in the collection $\SC_2(U)$ is a regular function.
\end{conditions}
\end{theorem}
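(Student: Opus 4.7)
The implication \ref{th-1-7-a} $\Rightarrow$ \ref{th-1-7-b} is immediate, since the restriction of a regular function to any algebraic subset is regular. For the converse, my plan is to invoke Theorem~\ref{th-1-5} and verify its condition \ref{th-1-5-d} for some positive integer $k$: that $f|_C$ is regular for every $C \in \SC_1(U)$ and real analytic for every $C \in \FC^k(U)$.

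For the analytic requirement on $\FC^k(U)$-curves, I would first deduce from hypothesis \ref{th-1-7-b} that $f$ is real analytic on $U$. The restriction $f|_S$ is regular, hence real analytic, for every $S \in \SC_2(U)$; since every point of $U$ lies on a sufficiently rich family of algebraic $2$-spheres in $\SC_2(U)$ spanning the requisite transverse directions, the real analytic Hartogs-type theorem of \cite{bib6} can be invoked to conclude that $f$ is real analytic on $U$. Real analyticity of $f|_C$ for every $C \subset U$—in particular for $C \in \FC^k(U)$—then follows directly from the paper's definition, with the ambient function $f$ itself providing the required local real analytic extension. For the regular requirement on $\SC_1(U)$-curves, I would establish the geometric \emph{embedding claim}: every $C \in \SC_1(U)$ is contained in some $S \in \SC_2(U)$. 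Since $\dim X \geq 3$, this should follow from a Bertini-type argument on the complexification $X^{\CB}$: a suitably generic combination of polynomial functions vanishing on $C$ cuts out a nonsingular algebraic surface through $C$ in $X$, and a small algebraic deformation plus component-by-component topological analysis places its relevant components into $\SC_2(U)$. Granting this, hypothesis \ref{th-1-7-b} yields regularity of $f|_S$, whence $f|_C = (f|_S)|_C$ is regular as an algebraic restriction. Theorem~\ref{th-1-5} then closes the argument.

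The main obstacle is the embedding claim. Producing an algebraic surface $S \in \SC_2(U)$ through a prescribed $\SC_1(U)$-curve $C$, with correct topological type of components on top of algebraic nonsingularity, requires delicate control over the relationship between $X$ and its complexification $X^{\CB}$; this is precisely where the faithfulness framework developed earlier in the paper and the techniques of \cite{bib22} are expected to be decisive, ensuring that the complex-analytic and complex-algebraic pictures near relevant points remain compatible. The transfer of the analytic Hartogs-type result of \cite{bib6} to our algebraic setting is the second nontrivial input, but it should essentially amount to adapting the analytic argument once one verifies that enough algebraic $2$-spheres in $\SC_2(U)$ pass through each point of $U$ in enough transverse directions.
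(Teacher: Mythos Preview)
Your plan has a genuine gap at the embedding claim. When $\dim X = 3$, a curve $C \in \SC_1(U)$ may be \emph{knotted} in $X$ (such nonsingular algebraic circles exist, e.g.\ by approximating a smooth knot via the techniques of \cite{bib22}), and a knotted circle cannot lie on any embedded $2$-sphere, since it would then bound a disk in the ambient $3$-manifold. Hence no $S \in \SC_2(U)$ contains such a $C$, and no Bertini-type argument can repair this. The analyticity step via \cite{bib6} is also incomplete: Theorem~2 of \cite{bib6} tests a function on an open set in $\R^m$ against Euclidean $2$-spheres. After a local finite projection $\rho \colon X \to \R^m$ as in Lemma~\ref{lem-2-13}, the preimage $\rho^{-1}(\Sigma)$ of such a sphere $\Sigma$ is algebraic but typically has several connected components, only one of which---call it $S_a$---lies in $U$; and $S_a$ by itself is in general not an algebraic set. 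Thus hypothesis~\ref{th-1-7-b} does not apply to $S_a$, and you cannot yet assert that $f|_{S_a}$ is analytic, which is precisely what \cite{bib6} needs. Your assertion that ``enough algebraic $2$-spheres in $\SC_2(U)$ pass through each point'' hides exactly this difficulty.

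The paper resolves both obstacles simultaneously by applying Theorem~\ref{th-1-5} not to $(X,U)$ but at the \emph{surface} level, taking $U' = S_a$. Every test curve $C \in \SC_1(S_a) \cup \FC^k(S_a)$ already lies in the $\Cinfty$ $2$-sphere $S_a = \varphi^{-1}(0)$ for a transverse $\Cinfty$ map $\varphi \colon X \to \R^{m-2}$ (obtained from \cite{bib7}); since $C$ is faithful, Lemma~\ref{lem-3-6} (after first reducing to $X$ compact via resolution of singularities) yields a polynomial map $\psi$ close to $\varphi$ with $C \subset Y \coloneqq \psi^{-1}(0)$, and this $Y$ is a genuine member of $\SC_2(U)$. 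Then~\ref{th-1-7-b} gives $f|_Y$ (hence $f|_C$) regular, Theorem~\ref{th-1-5} gives $f|_{S_a}$ regular, and only then does \cite{bib6} apply to conclude that $f$ is real analytic on $U$; Theorem~\ref{th-3-3} and Lemma~\ref{lem-3-4} finish. The crucial point is that curves are embedded into algebraic $2$-spheres only when they already sit inside a smooth one, which sidesteps the knotting obstruction entirely.
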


Next, we give some comments on the assumptions in our theorems.

\begin{remark}\label{rem-1-8}
Our results are optimal in the following sense.

\begin{inthm}[widest=1.8.3]
\item\label{rem-1-8-1} Let $g \colon \R^n \to \R$, for $n \geq 2$, be the function defined by
\begin{equation*}
    g(x_1, \ldots, x_n) = 
    \begin{cases}
    \frac{x_1^8 + x_2(x_1^2 - x_2^3)^2}{x_1^{10} + (x_1^2 - x_2^3)^2 + x_3^2 + \cdots + x_n^2} &\text{for } (x_1, \ldots, x_n) \neq (0,\ldots,0)\\
    0 &\text{for }(x_1, \ldots, x_n) = (0,\ldots,0).
    \end{cases}
\end{equation*}
The restriction of $g$ to every nonsingular algebraic curve in $\R^n$ is a regular function,
and the restriction of $g$ to every $1$-dimensional real analytic submanifold of $\R^n$ is a real analytic function, see \cite[Example~2.3]{bib16} for the case $n=2$. However, $g$ is not regular on $\R^n$ because it is not even locally bounded on the curve defined by $x_1^2 - x_2^3 = 0$, $x_3 = 0, \ldots, x_n=0$. Hence in Theorems \ref{th-1-1}, \ref{th-1-4} and \ref{th-1-5} we have to allow algebraic curves with singularities of some type.

\item\label{rem-1-8-2} The real algebraic curves in Theorem~\ref{th-1-1} and in Theorems \ref{th-1-4} and \ref{th-1-5} (except (b)) are faithful and are $\mathcal{C}^k$ manifolds.
As we have already noted, an algebraic set in $\R^n$ is nonsingular if and only if it is faithful and a $\Cinfty$ manifold. So, according to \ref{rem-1-8-1}, if algebraic curves used for testing regularity of functions are to be faithful, they cannot be $\Cinfty$ manifolds at the same time. Dropping faithfulness is undesirable because precisely that is needed in our proofs of Theorems \ref{th-1-2} and \ref{th-1-7}.

\item\label{rem-1-8-3} Suppose that the set $U$ in Theorems \ref{th-1-5} and \ref{th-1-7} is disconnected. Let $U_0$ be a connected component of $U$, and let $f \colon U \to \R$ be the function defined by $f=0$ on $U_0$ and $f=1$ on $U \setminus U_0$. Obviously, $f$ is not a regular function on $U$, but the restriction of $f$ to every connected algebraic set in $X$, contained in $U$, is a constant (hence regular) function. Therefore it is essential that the algebraic sets in the collections $\SC_d(U)$, with $d=1$ and $d=2$, are not necessarily connected.
\end{inthm}
\end{remark}

\begin{remark}\label{rem-1-9}
In the second part of the present paper, currently in preparation, we prove 
that it is 
sufficient to restrict attention to algebraic curves which are analytic manifolds (that admit singularities and therefore cannot be faithful). More precisely, among the results we obtain 
the following:
A~function $f \colon X \to \R$ defined on a connected nonsingular real algebraic set $X$ in $\R^n$ (with $\dim X \geq 2$) is 
regular if and only if for every algebraic curve $C$ in $X$, which has at most one singular point and is a real analytic 
submanifold homeomorphic to the unit circle, the restriction $f|_C$ is a regular function. 
\end{remark}

Our results fit into the research program in real algebraic geometry focusing on continuous rational functions, regulous functions and piecewise-regular functions \cite{bib2, bib3, bib5, bib13, FMQ21, bib16, bib17, bib18, bib20} (see also the recent surveys \cite{bib19, bib21} and the references therein).

The paper is organized as follows. In Section~\ref{sec:2} we prove, by a rather intricate argument, a criterion for analyticity of some real meromorphic functions on nonsingular real algebraic surfaces. This enables us to apply in a novel way the tools developed in \cite{bib6, bib9, bib16, bib22}, leading to the proofs of Theorems \ref{th-1-4}, \ref{th-1-5}, \ref{th-1-6} and \ref{th-1-7} (hence also Theorems \ref{th-1-1} and \ref{th-1-2}) in Section~\ref{sec:3}.

\section{Real meromorphic functions on algebraic surfaces}\label{sec:2}

The following result will play the key role in Section~\ref{sec:3}.

\begin{proposition}\label{prop-2-1}
Let $k$ be a positive integer, $X$ an irreducible nonsingular algebraic set in $\R^n$, with $\dim X = 2$, and $f \colon U \to \R$ a function defined on an open subset $U$ of $X$. Let $a$ be a point in $U$ and let $g,h \colon U \to \R$ be two real analytic functions such that
\begin{equation*}
    \ZC(h)=\{a\} \quad \text{and} \quad f(b)=\frac{g(b)}{h(b)} \quad \text{for all } b \in U \setminus \{a\}.
\end{equation*}
Assume that for every algebraic curve $C$ in the collection $\FC^k(U)$, with singular point at~$a$, the restriction $f|_C$ is a real analytic function. Then $f$ is real analytic on $U$.
\end{proposition}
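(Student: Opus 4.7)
The function $f$ is automatically real analytic on $U\setminus\{a\}$, where it equals $g/h$ with $h$ nonvanishing and $g,h$ real analytic, so the task is to show that $g/h$ extends real analytically across $a$. Since $\dim X = 2$ and $a$ is a nonsingular point, I would first transport the problem by a real analytic diffeomorphism from an open neighborhood of $a$ in $X$ onto an open neighborhood of $0\in\R^2$, so that we may work with real analytic germs $g,h$ at $0\in\R^2$ with $\ZC(h)=\{0\}$. Extending to holomorphic germs $g_{\CB},h_{\CB}$ at $0\in\CB^2$, it suffices to prove the divisibility $h_{\CB}\mid g_{\CB}$ in $\OC_{\CB^2,0}$; the quotient is then holomorphic, and its restriction to $\R^2$ provides the desired real analytic extension of $f$.

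Write the local factorization $h_{\CB}=u\prod_{j=1}^{r}H_{j}^{m_j}$ with $u$ a holomorphic unit and $H_j\in\OC_{\CB^2,0}$ irreducible, so that the germ of $\ZC(h_{\CB})$ at $0$ decomposes into complex analytic branches $B_j=\ZC(H_j)$. Divisibility $h_{\CB}\mid g_{\CB}$ is equivalent to $g_{\CB}$ vanishing to order at least $m_j$ on each $B_j$, that is, to the inequality
\[
\mathrm{ord}_{t=0}(g_{\CB}\circ\pi_j) \;\geq\; \mathrm{ord}_{t=0}(h_{\CB}\circ\pi_j)
\]
for a Puiseux parametrization $\pi_j\colon(\CB,0)\to(\CB^2,0)$ of $B_j$. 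The whole problem reduces to establishing this inequality for every branch.

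The plan for each $B_j$ is to match it with a sequence of $(p,q)$-curves $C_{j,p,q}\in\FC^k(U)$, $(p,q)\in\Lambda^k$, having singular point at $a$, whose real analytic parametrizations $\gamma_{j,p,q}$ agree with $\pi_j$ to an order that grows with $q/p$. For each such $C_{j,p,q}$ the hypothesis gives that $f\circ\gamma_{j,p,q}$ is real analytic at $0$, and combining this with $f=g/h$ away from the origin forces
\[
\mathrm{ord}_{t=0}(g\circ\gamma_{j,p,q})\;\geq\;\mathrm{ord}_{t=0}(h\circ\gamma_{j,p,q}).
\]
Letting $q/p\to\infty$ (always keeping $pk<q$), these orders converge to the corresponding complex orders along $\pi_j$, so the inequality passes to the limit.

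The hardest step will be this matching construction: the curves in $\FC^k(U)$ must be \emph{algebraic} in $X$, whereas the branches $B_j$ are arbitrary complex analytic germs of $h_{\CB}$. In the model case $X=\R^2$ the explicit family $\GC^k(\R^2)\subset\FC^k(\R^2)$ of translates of $C_{p,q}$, together with algebraic changes of coordinates, should supply $(p,q)$-curves whose tangent line and initial Puiseux behavior can be aligned with those of $\pi_j$; for general $X$ a further algebraic realization step, plausibly of Artin-approximation type, is expected to be needed in order to pass from analytic $(p,q)$-curve-germs to genuine algebraic $(p,q)$-curves contained in $U\subset X$. An additional subtlety is that some branches $B_j$ may have no real points other than $0$; there one would exploit the fact that branches of $h_{\CB}$ and of $g_{\CB}$ come in complex-conjugate pairs (by reality of $h$ and $g$), so that purely real tests along $(p,q)$-curves still detect the required vanishing. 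Once the inequality above is verified for every $B_j$, the divisibility $h_{\CB}\mid g_{\CB}$ and hence the real analyticity of $f$ at $a$ follow.
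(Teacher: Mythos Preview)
Your reduction to the divisibility $h_{\CB}\mid g_{\CB}$ in $\OC_{\CB^2,0}$ and the branch-by-branch reformulation via Puiseux parametrizations are correct, and this is also how the paper begins its analysis. The genuine gap is the ``matching'' step. A $(p,q)$-curve has, by definition, its germ analytically equivalent to $x^p=y^q$, so its multiplicity at the singular point is exactly $p$ and its only Puiseux characteristic pair is $(p,q)$. You cannot make the parametrization of such a curve agree with an arbitrary Puiseux parametrization $\pi_j$ of a branch of $h_{\CB}$ to arbitrarily high order: the local analytic type is rigid, and neither translates of $C_{p,q}$ nor algebraic changes of coordinates change that. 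In particular, if $B_j$ has multiplicity $N\neq p$ your curves never approximate it at all, so the proposed limit $q/p\to\infty$ does not transfer the order inequality from $\gamma_{j,p,q}$ to $\pi_j$. The complex-conjugate-pair remark for non-real branches likewise does not supply the missing inequality.

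The paper circumvents this entirely with a different idea (its Lemma~2.8): instead of approximating the branches of $h_{\CB}$, it fixes one branch $T$ of $\ZC(h_{\CB})$ on which $g_{\CB}$ does not vanish identically, sets $m_0$ larger than $\mathrm{ord}_0(g_{\CB}\circ\phi)$ for the normalization $\phi$ of $T$, and observes that if $f$ extended analytically along $\ZC(F)$ for some $F\in\mf_{\R,2}^{m_0}$ generating $I(\ZC(F))$, then $g-uh=wF$ would restrict to $T$ to give a contradiction on orders. Thus a \emph{single} $(p,q)$-curve with $p>m_0$ suffices, and no approximation of $B_j$ is needed. For general $X$, the paper does not invoke Artin approximation; it writes down an explicit polynomial $W_r(x,y,z)=F_{p,q}(x,y)+\bigl(\sum_i c\,(z_i-\Phi_i(x,y))^2\bigr)^r$ on $\R^n$, with $\Phi_i$ polynomial approximations of the analytic graph of $U$ over $\R^2$, and shows that $C_r=\ZC(W_r|_X)$ lies in $U$, is homeomorphic to a circle, and is a faithful $(p,q)$-curve for $r$ large. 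Your proposal is missing both this order-of-vanishing argument and a concrete algebraic-realization mechanism.
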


The proof of Proposition~\ref{prop-2-1} requires some preparation and will be preceded by several auxiliary results. Suggestions conveyed to us by S. Donaldson allowed us to simplify the original proof. The case $X=\R^2$ will be analyzed first.

\subsection{Real meromorphic functions of two variables}\label{subsec:2.1}

Let $\mathbb{K}$ denote either the field $\mathbb{R}$ of real numbers or the field $\mathbb{C}$ of
complex numbers. Let $\OC_n^{\mathbb{K}}$ denote the ring of all  $\mathbb{K}$-analytic function-germs at the origin in 
$\mathbb{K}^n$. If no confusion is possible, we will make no distinction between function-germs and their representatives. The ring $\OC_n^{\mathbb{K}}$ is  
a local ring whose maximal ideal is denoted by $\mf_{\mathbb{K},n}.$
As usual, given a positive integer $i$ and an ideal $\mf$, we denote by $\mf^i$ the $i$th power of $\mf$.

\begin{lemma}
\label{andiffeo} Let $p<q$ be prime numbers.
Then for every $v\in\mf_{\mathbb{K},2}^{2q}$ there is a $\mathbb{K}$-analytic 
diffeomorphism-germ $G:(\mathbb{K}^2,0)\rightarrow(\mathbb{K}^2,0)$ such that $F_{p,q}, H_{p,q}$ regarded as elements
of $\mf_{\mathbb{K},2}$ satisfy $$F_{p,q}+v=H_{p,q}\circ G.$$ 
\end{lemma}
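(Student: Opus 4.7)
The plan is to construct the diffeomorphism $G$ in the multiplicative form $G(x,y) = (xA(x,y),\, yB(x,y))$, where $A, B$ are $\mathbb{K}$-analytic units with $A(0) = B(0) = 1$. Writing $v_0 := F_{p,q} + v - H_{p,q} = x^{2q} + y^{2q} + v \in \mf_{\mathbb{K},2}^{2q}$, the required identity $H_{p,q}\circ G = F_{p,q} + v$ becomes $x^p A^p - y^q B^q = x^p - y^q + v_0$, and I would satisfy it by choosing $A^p = 1+\gamma_1$ and $B^q = 1-\gamma_2$ for analytic germs $\gamma_1, \gamma_2 \in \mf_{\mathbb{K},2}$ with
\[
v_0 = x^p\,\gamma_1 + y^q\,\gamma_2.
\]
The first task is therefore to produce such a splitting.

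To do so I would note that any monomial $x^i y^j$ with $i + j \geq p + q - 1$ satisfies $i \geq p$ or $j \geq q$ (otherwise $i+j \leq (p-1)+(q-1)$), so the ideal containment $\mf_{\mathbb{K},2}^{p+q-1} \subset (x^p,\,y^q)$ holds in the convergent power series ring. Since $v_0 \in \mf_{\mathbb{K},2}^{2q}$ and $2q \geq p+q-1$, I would split the Taylor expansion $v_0 = \sum_{i+j \geq 2q} c_{ij}\,x^i y^j$ as
\[
\gamma_1 := \sum_{i \geq p} c_{ij}\,x^{i-p}\,y^{j}, \qquad \gamma_2 := \sum_{i < p,\, j \geq q} c_{ij}\,x^{i}\,y^{j-q}.
\]
The two index sets partition $\{(i,j): i+j \geq 2q\}$ (for $i<p$ one has $j \geq 2q-i > 2q-p \geq q$), and a standard Cauchy-majorant estimate on the coefficients $c_{ij}$ of a convergent series yields geometric bounds showing that both subseries converge absolutely on a neighborhood of the origin. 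Both $\gamma_1, \gamma_2$ vanish at $0$, so they define analytic germs in $\mf_{\mathbb{K},2}$ as required.

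To conclude, I would apply the analytic implicit function theorem to $T^p - (1+\gamma_1)$ and $T^q - (1-\gamma_2)$ at $T=1$, producing unique analytic germs $A, B \in \OC_2^{\mathbb{K}}$ with $A^p = 1+\gamma_1$, $B^q = 1-\gamma_2$, and $A(0) = B(0) = 1$; in the case $\mathbb{K} = \mathbb{R}$ one additionally notes that $1+\gamma_1$ and $1-\gamma_2$ stay positive near $0$, which is immediate. Setting $G(x,y) := (xA(x,y),\,yB(x,y))$ then gives $(xA)^p - (yB)^q = x^p(1+\gamma_1) - y^q(1-\gamma_2) = x^p - y^q + v_0 = F_{p,q} + v$, while the Jacobian of $G$ at the origin is the identity, so $G$ is an analytic diffeomorphism-germ. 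The only mildly delicate step is the convergence estimate for $\gamma_1$ and $\gamma_2$ in the splitting; everything else is formal, and no coprimality of $p, q$ beyond $p < q$ is used.
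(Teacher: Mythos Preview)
Your proof is correct and follows essentially the same approach as the paper: the paper writes $F_{p,q}+v = x^p(1+f) + y^q(-1+g)$ and takes $G(x,y) = (x\sqrt[p]{1+f},\, y\sqrt[q]{-1+g})$, which is exactly your multiplicative ansatz $G=(xA,yB)$ with the splitting $v_0 = x^p\gamma_1 + y^q\gamma_2$. You simply supply the details (the monomial argument for the splitting, the convergence of the subseries, and the implicit function theorem for the roots) that the paper compresses into the word ``clearly''.
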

\begin{proof} Clearly, $F_{p,q}+v=x^p(1+f(x,y))+y^q(-1+g(x,y))$, for some $\mathbb{K}$-analytic function-germs $f, g$
vanishing at the origin. Hence, it is sufficient to take $$G(x,y)=(x\sqrt[p]{1+f(x,y)}, y\sqrt[q]{-1+g(x,y)}).$$
\end{proof}
\begin{lemma}\label{remark-2-4}  Let $p<q$ be prime numbers. Let $B$ be an open disc in $\R^2$ centered at the origin, $\psi_1,\ldots,\psi_m$ real-valued analytic functions on $B$ that vanish at the origin, and $v_r=(\psi_1+\cdots\psi_m)^r$ for $r=1,2,\ldots$. Then there are an open neighborhood $V\subset B$ of the origin, a positive integer $r_0,$ and real analytic maps $H_r:V\rightarrow B$ for $r\geq r_0$ such that     

\begin{iconditions}
\item\label{lem-2-3-i} the sequence  $\{H_r\}_{r\geq r_0}$ converges uniformly to the inclusion map $V\hookrightarrow B$ together with the first order partial derivatives, and  
\item\label{lem-2-3-ii} for each $r\geq r_0$ the map  $H_r:V\rightarrow H_r(V)$ is an analytic diffeomorphism for which $$F_{p,q}+v_r=F_{p,q}\circ H_r\mbox{ on }V.$$  
\end{iconditions}
\end{lemma}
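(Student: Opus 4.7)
The plan is to apply Lemma~\ref{andiffeo} twice and compose. Setting $\psi := \psi_1+\cdots+\psi_m$, for $r \geq 2q$ we have $v_r = \psi^r \in \mathfrak{m}_{\R,2}^{2q}$, so Lemma~\ref{andiffeo} produces an analytic diffeomorphism-germ $G_r \colon (\R^2,0) \to (\R^2,0)$ with $H_{p,q} \circ G_r = F_{p,q} + v_r$. Applying the same lemma to the trivial perturbation $v = 0$ yields a germ $G_0$ with $H_{p,q}\circ G_0 = F_{p,q}$. Composing, $F_{p,q} + v_r = H_{p,q}\circ G_r = F_{p,q}\circ (G_0^{-1}\circ G_r)$, so $H_r := G_0^{-1}\circ G_r$ formally satisfies the identity required in~\ref{lem-2-3-ii}. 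It remains to arrange that $G_0$, $G_r$, and hence $H_r$, are all defined on a common open neighborhood $V\subset B$ of the origin for $r\geq r_0$, and that $H_r \to \mathrm{id}_V$ in the $C^1$ topology; the diffeomorphism property in~\ref{lem-2-3-ii} will then follow from a standard inverse-function-theorem argument.

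To control the family, I would unpack the explicit construction in the proof of Lemma~\ref{andiffeo}. There the diffeomorphism is produced by extracting $p$-th and $q$-th roots of analytic functions close to $\pm 1$, with the arguments being germs $f, g \in \mathfrak{m}_{\R,2}$ chosen so that
\[
x^p f + y^q g \;=\; (F_{p,q}+v)-(x^p-y^q)\;=\; x^{2q}+y^{2q}+v.
\]
Let $(f_r,g_r)$ and $(f_0,g_0)$ denote the splittings corresponding to $v=v_r$ and $v=0$, respectively (for instance $f_0 = x^{2q-p}$, $g_0 = y^q$). The differences $\delta f_r := f_r-f_0$ and $\delta g_r := g_r-g_0$ must satisfy $x^p\,\delta f_r + y^q\,\delta g_r = v_r$, so the task reduces to producing such a splitting of $v_r$ with $\delta f_r, \delta g_r$ real analytic on a fixed $V$ and with $\|\delta f_r\|_{C^1(V)}+\|\delta g_r\|_{C^1(V)}\to 0$ as $r\to\infty$.

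For the splitting itself I would use a Taylor expansion in $y$: write $v_r(x,y) = \sum_{j=0}^{q-1} a_j^{(r)}(x)\,y^j + y^q\, b^{(r)}(x,y)$ with $a_j^{(r)}$ and $b^{(r)}$ analytic on a fixed subdisk of $B$. Because $v_r\in\mathfrak{m}_{\R,2}^r$ with $r\geq 2q\geq p+q-1$, each $a_j^{(r)}$ vanishes to order $\geq p$ at $x=0$, so $a_j^{(r)}(x) = x^p\,\tilde a_j^{(r)}(x)$ with $\tilde a_j^{(r)}$ analytic; the choice $\delta f_r := \sum_{j=0}^{q-1} \tilde a_j^{(r)}(x)\,y^j$, $\delta g_r := b^{(r)}(x,y)$ then works. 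The key technical step, and the one I expect to be the main obstacle, is the quantitative estimate: on a suitably small disk $V$, the $C^1$-norms of $\delta f_r$ and $\delta g_r$ must be dominated by a geometric term $C r \lambda^{r-1}$ for some $\lambda\in(0,1)$ independent of $r$. My plan for this is to complexify: since $\psi(0)=0$, after shrinking $B$ we may assume the complexification $\psi^{\CB}$ extends to a fixed complex polydisc on which $|\psi^{\CB}|\leq\lambda<1$, giving $|v_r^{\CB}|=|\psi^{\CB}|^r\leq\lambda^r$ uniformly; the Cauchy integral formula then yields uniform bounds on $a_j^{(r)}$, $b^{(r)}$, $\tilde a_j^{(r)}$ and all their partial derivatives, geometrically small in $r$.

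Once $\delta f_r,\delta g_r\to 0$ in $C^1(V)$, one has $f_r\to f_0$ and $g_r\to g_0$ in $C^1(V)$; after further shrinking $V$ (so that the expressions under the $p$-th and $q$-th roots in the formula for $G_r$ have the correct sign for all $r\geq r_0$, which is automatic once $r$ is large), it follows that $G_r\to G_0$ in $C^1(V)$. Since $G_0$ is a diffeomorphism-germ, a standard inverse-function-theorem argument shows that for $V$ small enough and $r\geq r_0$ both $G_r$ and $G_0^{-1}$ are honest analytic diffeomorphisms chosen so that $H_r := G_0^{-1}\circ G_r$ is an analytic diffeomorphism $V\to H_r(V)\subset B$. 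The $C^1$ convergence $H_r\to G_0^{-1}\circ G_0 = \mathrm{id}_V$ is preserved under composition with the fixed $C^1$ map $G_0^{-1}$, giving condition~\ref{lem-2-3-i}, while $F_{p,q}\circ H_r = F_{p,q}+v_r$ holds on $V$ by construction, giving~\ref{lem-2-3-ii}.
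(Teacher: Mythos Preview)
Your approach is correct and structurally identical to the paper's: both construct $G_0$ and $G_r$ via the explicit root-extraction formula from Lemma~\ref{andiffeo}, with $f_0=x^{2q-p}$, $g_0=y^q$ and $f_r=f_0+\delta f_r$, $g_r=g_0+\delta g_r$ coming from a splitting $v_r=x^p\,\delta f_r+y^q\,\delta g_r$, and then set $H_r=G_0^{-1}\circ G_r$. The only real difference lies in how the splitting is obtained and how $\delta f_r,\delta g_r\to 0$ in $C^1$ is proved. You use a Taylor expansion in $y$ followed by complexification and Cauchy estimates; this works, but is heavier than necessary. The paper exploits the multiplicative structure of the sequence: once a single splitting $v_{r_1}=x^p u_{r_1}+y^q w_{r_1}$ is fixed, one simply sets $u_r=u_{r_1}\cdot v_{r-r_1}$ and $w_r=w_{r_1}\cdot v_{r-r_1}$ for $r>r_1$, and the $C^1$ convergence to $0$ is then immediate from $|v_1|<1$ on a small disc. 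This factoring trick bypasses the Cauchy-estimate machinery entirely and makes the uniform domain of definition transparent.
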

\begin{proof} Since $\psi_i(0,0)=0$ for $i=1,\ldots,m,$ there are real analytic functions $\lambda_i, \mu_i:B\rightarrow\R$ such that $$\psi_i(x,y)=x\lambda_i(x,y)+y\mu_i(x,y)\mbox{ for all }(x,y)\in B.$$
Therefore there is a positive integer $r_1$ such that $v_{r_1}(x,y)=x^pu_{r_1}(x,y)+y^qw_{r_1}(x,y)$, where
$u_{r_1}, w_{r_1}:B\rightarrow\R$ are real analytic functions. For $r>r_1$ define real analytic functions $u_r, w_r:B\rightarrow\R$ by the equations
$u_r=u_{r_1}v_{r-r_1}$ and $w_r=w_{r_1}v_{r-r_1},$ and note that
$$v_r(x,y)=x^pu_r(x,y)+y^qw_r(x,y)\mbox{ for all }(x,y)\in B.$$ 
Since $v_1(0,0)=0$, there is an open disc $B_0$ in $\R^2$ centered at the origin such that its closure is contained in $B$ and the sequences $\{u_r|_{B_0}\},$ $\{w_r|_{B_0}\}$ converge uniformly to $0$ together with the first order partial derivatives. 

Shrinking $B_0$ and choosing a sufficiently large integer $r_0>r_1,$ we see that the formulas
$$G_0(x,y)=(x\sqrt[p]{1+x^{2q-p}}, y\sqrt[q]{-1+y^q})$$
$$G_r(x,y)=(x\sqrt[p]{1+x^{2q-p}+u_r(x,u)}, y\sqrt[q]{-1+y^q+w_r(x,y)})$$ for $r\geq r_0$ define real analytic maps from $B_0$ into $\R^2,$ and $G_0:B_0\rightarrow G_0(B_0)$ is a real analytic diffeomorphism. By construction, the sequence $\{G_r|_{B_0}\}_{r\geq r_0}$ converges uniformly to $G_0|_{B_0}$ together with the first order partial derivatives.
By \cite[Lemma 1.3, p. 36]{bibHi}), increasing $r_0,$ we may assume that $G_r:B_0\rightarrow G_r(B_0)$ is a real analytic diffeomorphism for $r\geq r_0.$ Now, let $V\subset\R^2$ be an open neighborhood of the origin with $\bar{V}\subset B_0.$ Increasing $r_0$ once again, we get $G_r(V)\subset G_0(B_0)$ for $r\geq r_0.$ Since on $V,$ we have $F_{p,q}=H_{p,q}\circ G_0$ and
$F_{p,q}+v_r=H_{p,q}\circ G_r,$ for $r\geq r_0,$ the real analytic maps $H_r=(G_0|_{B_0})^{-1}\circ(G_r|_V),$ for $r\geq r_0,$ satisfy the required conditions.

\end{proof}

\begin{lemma}\label{Kirreducible}
Let $p<q$ be prime numbers.
Then for every $v\in\mf_{\mathbb{K},2}^{2q}$ the $\mathbb{K}$-analytic function-germ
$F_{p,q}+v$ is irreducible.
\end{lemma}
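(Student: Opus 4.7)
The plan is to reduce the statement, via Lemma \ref{andiffeo}, to the classical irreducibility of $x^p-y^q$ in $\OC_2^{\mathbb{K}}$ when $\gcd(p,q)=1$. Lemma \ref{andiffeo} produces a $\mathbb{K}$-analytic diffeomorphism-germ $G\colon(\mathbb{K}^2,0)\to(\mathbb{K}^2,0)$ satisfying $F_{p,q}+v=H_{p,q}\circ G$, and the pullback $G^*$ is a ring automorphism of $\OC_2^{\mathbb{K}}$, which preserves irreducibility. Hence it suffices to show that $H_{p,q}(x,y)=x^p-y^q$ is irreducible in $\OC_2^{\mathbb{K}}$.

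I would next pass from $\R$ to $\CB$: any factorization of $x^p-y^q$ in $\OC_2^{\R}$ into two non-units remains, after complexification, a factorization in $\OC_2^{\CB}$ into two non-units, so the problem reduces to proving $x^p-y^q$ is irreducible in $\OC_2^{\CB}$. Since $x^p-y^q$ is a Weierstrass polynomial of degree $p$ in $x$ over $\CB\{y\}$, the Weierstrass preparation theorem combined with Gauss's lemma shows that its irreducibility in $\OC_2^{\CB}$ is equivalent to its irreducibility in $K[x]$, where $K=\mathrm{Frac}(\CB\{y\})$.

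I would close the argument by a Newton--Puiseux computation. In an algebraic closure of $K$, the roots of $x^p-y^q$ are $\zeta_p^j\,y^{q/p}$ for $j=0,\ldots,p-1$, where $\zeta_p$ is a primitive $p$-th root of unity, and they all lie in the cyclic extension $K(y^{1/p})$ of $K$. The Galois group $\mathrm{Gal}(K(y^{1/p})/K)$, generated by $y^{1/p}\mapsto\zeta_p y^{1/p}$, acts on these roots via $y^{q/p}\mapsto\zeta_p^q y^{q/p}$. Because $p,q$ are distinct primes, $\gcd(p,q)=1$, so $\zeta_p^q$ is again a primitive $p$-th root of unity, and the Galois orbit of $y^{q/p}$ consists of all $p$ roots. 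Thus $x^p-y^q$ is the minimal polynomial of $y^{q/p}$ over $K$, hence irreducible. I do not anticipate a serious obstacle: the whole argument is a formal reduction via Lemma \ref{andiffeo} followed by a standard Puiseux--Galois calculation; the only genuinely non-bookkeeping step is the use of the coprimality of $p$ and $q$ to see transitivity of the Galois action on the root set.
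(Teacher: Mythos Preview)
Your proposal is correct and follows the same route as the paper: reduce via Lemma~\ref{andiffeo} to the irreducibility of $H_{p,q}=x^p-y^q$ in $\OC_2^{\mathbb{K}}$, which the paper simply asserts as known. Your additional Weierstrass--Gauss--Puiseux argument for the irreducibility of $H_{p,q}$ is a standard and valid way to justify that step.
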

\begin{proof} The $\mathbb{K}$-analytic function-germ of $H_{p,q}$ at the origin is irreducible. Hence the assertion is an immediate
consequence of Lemma~\ref{andiffeo}.
\end{proof}

For any germ $A$ of a subset of $\R^2$ at the origin, let $I(A)$ denote the ideal of all function-germs in $\OC^{\R}_2$ vanishing on $A$. 
The following lemma formulated in the real setting clearly has its complex analogue.

\begin{lemma}\label{Rgenerate}
Let $p<q$ be prime numbers. 
Then for every $v\in\mf_{\R,2}^{2q}$ the real analytic set-germ 
$\mathcal{Z}(F_{p,q}+v)$ is irreducible of dimension~$1$, and
\begin{equation*}
    I(\mathcal{Z}(F_{p,q}+v)) = (F_{p,q}+v)\OC^{\R}_2.
\end{equation*}
\end{lemma}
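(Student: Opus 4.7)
The plan is to use Lemma~\ref{andiffeo} to replace $F_{p,q}+v$ by the simpler germ $H_{p,q}$, then handle the reduced case via the explicit parametrization of $E_{p,q}$ and R\"uckert's complex Nullstellensatz. By Lemma~\ref{andiffeo} there exists a real analytic diffeomorphism-germ $G\colon(\R^2,0)\to(\R^2,0)$ with $F_{p,q}+v=H_{p,q}\circ G$, so as set-germs at the origin $\ZC(F_{p,q}+v)=G^{-1}(E_{p,q})$. Since an analytic diffeomorphism preserves dimension and irreducibility of analytic set-germs, and is compatible with ideals of vanishing (if $\alpha\circ G^{-1}=H_{p,q}\cdot\beta$ then $\alpha=(F_{p,q}+v)\cdot(\beta\circ G)$), it suffices to prove the three conclusions for the germ $E_{p,q}$ at the origin.

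For this, note that $E_{p,q}$ admits the bijective real-analytic parametrization $\gamma\colon\R\to\R^2$, $\gamma(t)=(t^q,t^p)$; bijectivity uses that $q\geq 3$ is odd, which holds because $p<q$ are primes. Because $\gcd(p,q)=1$, the polynomial $H_{p,q}=x^p-y^q$ has a single Puiseux branch $y=x^{p/q}$, hence is irreducible in $\OC_2^{\CB}$ and consequently in $\OC_2^{\R}$. Granted the ideal equality, $(H_{p,q})\OC_2^{\R}$ will then be a height-one prime, whose quotient $\OC_2^{\R}/(H_{p,q})$ is a one-dimensional domain; this will give both irreducibility and dimension~$1$ for the set-germ. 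To prove the ideal equality, let $\alpha\in\OC_2^{\R}$ vanish on $E_{p,q}$. Then $\alpha\circ\gamma$ vanishes on a real neighborhood of $0$, so its complexification $\alpha_{\CB}\circ\gamma_{\CB}$ vanishes on a complex neighborhood of $0$ by the identity theorem. A direct check using $\gcd(p,q)=1$ shows that $\gamma_{\CB}\colon\CB\to\CB^2$, $t\mapsto(t^q,t^p)$, is a bijection onto the complex analytic germ $\ZC(H_{p,q}^{\CB})$; therefore $\alpha_{\CB}$ vanishes on all of $\ZC(H_{p,q}^{\CB})$. R\"uckert's Nullstellensatz, applied to the irreducible germ $H_{p,q}\in\OC_2^{\CB}$, yields $\beta_{\CB}\in\OC_2^{\CB}$ with $\alpha_{\CB}=H_{p,q}\cdot\beta_{\CB}$; since both $\alpha_{\CB}$ and $H_{p,q}$ are conjugation-invariant, so is $\beta_{\CB}$, hence $\beta_{\CB}$ is the complexification of some $\beta\in\OC_2^{\R}$, giving $\alpha=H_{p,q}\cdot\beta$ in $\OC_2^{\R}$.

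The main obstacle I anticipate is the surjectivity of $\gamma_{\CB}$ onto $\ZC(H_{p,q}^{\CB})$: this is what transfers vanishing of $\alpha$ on the one-real-dimensional set $E_{p,q}$ to vanishing of $\alpha_{\CB}$ on a two-real-dimensional complex analytic germ, thereby opening the way to R\"uckert. The surjectivity is not automatic—it uses $\gcd(p,q)=1$ to produce, for any $(x,y)\in\CB^2$ with $x^p=y^q$ and $y\neq 0$, a common $t\in\CB$ satisfying both $t^q=x$ and $t^p=y$, via the fact that raising to the $q$-th power is a bijection on the group of $p$-th roots of unity. Once this surjectivity is established, the three conclusions for $\ZC(F_{p,q}+v)$ follow from those for $E_{p,q}$ by transporting through the diffeomorphism $G$.
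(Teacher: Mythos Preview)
Your proof is correct and follows the same reduction as the paper: apply Lemma~\ref{andiffeo} to transport the problem to $H_{p,q}$, then verify the three conclusions for the model germ $E_{p,q}=\ZC(H_{p,q})$. The paper simply asserts the statement for $H_{p,q}$ as known and invokes Lemma~\ref{andiffeo}, whereas you supply a full justification of the $H_{p,q}$ case via the parametrization $t\mapsto(t^q,t^p)$, complexification, surjectivity of $\gamma_{\CB}$ (using $\gcd(p,q)=1$), and R\"uckert's Nullstellensatz with a conjugation-invariance descent to $\OC_2^{\R}$; this extra detail is sound and is exactly what underlies the paper's bare assertion.
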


\begin{proof}
The real analytic set-germ $\mathcal{Z}(H_{p,q})$ is irreducible of dimension $1$, and
\begin{equation*}
    I(\mathcal{Z}(H_{p,q})) = (H_{p,q})\OC^{\R}_2.
\end{equation*}
Hence, the assertion follows immediately by Lemma~\ref{andiffeo}.
\end{proof}

\begin{lemma}\label{lem-2-2} Let $k$ be a positive integer and $p, q$ primes with $pk<q.$ 
 Then the following hold:
\begin{iconditions}
\item\label{c-i} $C_{p,q}=\mathrm{graph}(\phi_+)\cup\mathrm{graph}(\phi_-),$ 
where $\phi_{+}, \phi_-:[\alpha,\beta]\rightarrow\R$ are given by
$$\phi_+(x)=\sqrt[q]{\frac{1}{2}+\frac{1}{2}\sqrt{1-4(x^p+x^{2q}})}$$
$$\phi_-(x)=\sqrt[q]{\frac{1}{2}-\frac{1}{2}\sqrt{1-4(x^p+x^{2q}})},$$ and $\alpha<\beta$ are (the only) real roots of $1-4(x^p+x^{2q}).$
\item\label{lem-2-2-ii} The origin is the only singular point of the real algebraic curve $C_{p,q}.$ 
\item\label{lem-2-2-iii} $C_{p,q}$ is a $\mathcal{C}^k$ manifold homeomorphic to the unit circle.
\item\label{lem-2-2-iv} The real algebraic curve $C_{p,q}$ is faithful.
\end{iconditions}
\end{lemma}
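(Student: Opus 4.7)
The plan is to regard $F_{p,q}(x,y)=0$ as a quadratic equation in $w:=y^q$, namely $w^2-w+(x^p+x^{2q})=0$, whose discriminant is $h(x):=1-4(x^p+x^{2q})$. Since $p<q$ are primes with $p\geq 2$, the prime $q$ is odd, so the real $q$-th root is globally defined; solving for $w$ and then extracting $q$-th roots produces exactly $\phi_\pm(x)$ whenever $h(x)\geq 0$. To show $h$ has exactly two real roots, I would analyze $h'(x)=-4x^{p-1}(p+2q\,x^{2q-p})$: for $p=2$ it vanishes only at $x=0$ (a global maximum, $h(0)=1$); for $p$ odd it vanishes at $x=0$ and at $x_1:=-(p/(2q))^{1/(2q-p)}<0$, with the global maximum at $x_1$ satisfying $h(x_1)>1$, as one checks using $x_1^{2q-p}=-p/(2q)$. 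Since $h(\pm\infty)=-\infty$ and $\max h>0$, monotonicity gives exactly two real zeros $\alpha<\beta$, and (i) follows.

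\textbf{Parts (ii) and (iii).} For (ii), I would compute $\partial_xF_{p,q}=x^{p-1}(p+2qx^{2q-p})$ and $\partial_yF_{p,q}=qy^{q-1}(2y^q-1)$ and impose that both vanish simultaneously with $F_{p,q}=0$. The branch $y=0$ reduces $F_{p,q}=0$ to $x^p(1+x^{2q-p})=0$, giving $x=0$ (on the curve, singular) or $x=-1$ when $p$ is odd, which however is nonsingular because $\partial_xF_{p,q}(-1,0)=p-2q\neq 0$. The branch $y^q=1/2$ forces $x^p+x^{2q}=1/4$ together with $\partial_xF_{p,q}=0$, whose only candidate $x=x_1<0$ (requiring $p$ odd) is ruled out by the sign clash $x_1^p<0$ versus the required $x_1^p(2q-p)/(2q)=1/4>0$. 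Hence the origin is the unique singular point. For (iii), since $\phi_+$ and $\phi_-$ agree at the endpoints $\alpha,\beta$, concatenating the two graphs produces a continuous bijection $S^1\to C_{p,q}$, which is a homeomorphism by compactness. $C^k$-smoothness off the origin is automatic from (ii) and real analyticity. At the origin I would apply Lemma~\ref{andiffeo} with $v=0\in\mf^{2q}_{\R,2}$ to obtain a real analytic diffeomorphism-germ $G$ with $F_{p,q}=H_{p,q}\circ G$, analytically identifying the germ $(C_{p,q},0)$ with that of $\{x^p=y^q\}$ at the origin; the latter is the graph $x=\sqrt[p]{y^q}$ over the $y$-axis, and the hypothesis $pk<q$, i.e.\ $q/p>k$, is precisely what makes $\sqrt[p]{y^q}=\operatorname{sgn}(y)\,|y|^{q/p}$ of class $C^k$ at $y=0$.

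\textbf{Part (iv) and main obstacle.} Faithfulness is automatic at every nonsingular point, so only the origin is at issue. Applying Lemma~\ref{Rgenerate} with $v=0$ shows the ideal of real analytic germs at $0$ vanishing on $(C_{p,q})_0$ is principal and generated by the polynomial germ of $F_{p,q}$; by the criterion of \cite[Proposition~4]{bib26} recalled in the introduction, this gives faithfulness at the origin, and hence globally. The step I expect to cost the most thought is (iii): although the reduction via Lemma~\ref{andiffeo} is clean, one must still verify the sharp $C^k$-threshold $q/p>k$ for $\sqrt[p]{y^q}$ by a careful derivative computation, and handle the case $p=2$, where the preimage germ is really a cusp with two tangent branches meeting at the origin, so that the $\mathcal{C}^k$-manifold statement must be read through the Puiseux-type parametrization $t\mapsto(t^q,t^p)$ rather than as an embedded $\mathcal{C}^k$ submanifold of $\R^2$.
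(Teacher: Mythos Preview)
Your strategy matches the paper's almost step for step: the substitution $w=y^q$ and quadratic formula for (i), the gradient computation for (ii), the reduction to $E_{p,q}$ via Lemma~\ref{andiffeo} for (iii), and a faithfulness criterion from the introduction for (iv). You supply details the paper omits (the explicit discriminant analysis showing $h$ has exactly two real zeros, and the case-by-case singular locus computation), which is fine. The only genuine divergence is in (iv): you invoke Lemma~\ref{Rgenerate} and the polynomial-generator criterion of \cite[Proposition~4]{bib26}, whereas the paper invokes Lemma~\ref{Kirreducible} and the ``irreducible complex germ'' criterion for topological manifolds stated in the introduction. Both routes are short and both criteria are already recorded in the paper, so this is a matter of taste; the paper's route is marginally more direct because it avoids reproving that $I(\ZC(F_{p,q}))$ is generated by $F_{p,q}$.

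Your worry about $p=2$ is justified, and you should be aware that the paper's proof does not address it either: it simply asserts that $\ZC(H_{p,q})$ is a $\C^k$ manifold when $pk<q$. For $p$ odd this is exactly your graph argument $x=\operatorname{sgn}(y)|y|^{q/p}$, but for $p=2$ the set $\{x^2=y^q\}$ is the ordinary cusp $y=|x|^{2/q}$, which is not a $\C^1$ embedded submanifold of $\R^2$ (it is neither a $\C^1$ graph over $x$ nor single-valued over $y$). Your proposed escape via the Puiseux parametrization $t\mapsto(t^q,t^p)$ does not help, since its derivative vanishes at $t=0$ and hence does not exhibit a $\C^k$ submanifold structure. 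In practice this does not damage the main theorems, because in the proof of Proposition~\ref{prop-2-1} the pair $(p,q)$ is chosen with $p>m_0$, so one may always take $p$ odd; but as a proof of Lemma~\ref{lem-2-2}(iii) for \emph{all} primes $p<q$ with $pk<q$, both your argument and the paper's leave the case $p=2$ unresolved.
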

\begin{proof}\ref{c-i}. Put $\tilde{F}(x,t):=x^p-t+x^{2q}+t^{2}.$ Solving the equation $\tilde{F}=0$ with respect to $t$
we obtain $t_+, t_-:[\alpha,\beta]\rightarrow\R$ given by
$$t_+(x)={\frac{1}{2}+\frac{1}{2}\sqrt{1-4(x^p+x^{2q}})}$$
$$t_-(x)={\frac{1}{2}-\frac{1}{2}\sqrt{1-4(x^p+x^{2q}})},$$ where $\alpha<\beta$ are real roots of $1-4(x^p+x^{2q}).$
Clearly $\mathcal{Z}(\tilde{F})=\mathrm{graph}(t_+)\cup\mathrm{graph}(t_-).$ Since $F_{p,q}(x,y)=\tilde{F}(x,y^q),$ the claim follows.

\ref{lem-2-2-ii}. Elementary calculation shows that $(0,0)$ is the unique critical point of $F_{p,q}$ lying on  $C_{p,q}$.  

\ref{lem-2-2-iii}. By \ref{lem-2-2-ii}, $C_{p,q}\setminus\{(0,0)\}$ is an analytic manifold. Lemma~\ref{andiffeo} shows that the germ of
$C_{p,q}$ at $(0,0)$ is analytically equivalent to the germ of $\mathcal{Z}(H_{p,q})$ at $(0,0)$. Since $pk<q,$ the curve $\mathcal{Z}(H_{p,q})$ is
a $\mathcal{C}^k$ manifold. Hence, $C_{p,q}$ is also a $\mathcal{C}^k$ manifold. 

By \ref{c-i}, $C_{p,q}$ is homeomorphic to the unit circle.

\ref{lem-2-2-iv} The algebraic complexification $C_{p,q}^{\mathbb{C}}$ of $C_{p,q}$ is given by the complex solutions of the equation $F_{p,q}=0.$ Hence, the complex analytic set-germ of $C_{p,q}^{\mathbb{C}}$ at the origin is irreducible by Lemma~\ref{Kirreducible}. Therefore $C_{p,q}$ is faithful in view of
\ref{lem-2-2-ii} and \ref{lem-2-2-iii}.
\end{proof}

\begin{lemma}\label{lem-2-8}
Let  $\Omega \subset \R^2$ be an open neighborhood of $(0,0) \in \R^2$, and 
$g,h\colon \Omega \to \R$ real analytic functions with $\ZC(h) = \{(0,0)\}$. Assume that the real analytic function
\begin{equation*}
    f \colon \Omega \setminus \{(0,0)\} \to \R, \quad (x,y) \mapsto \frac{g(x,y)}{h(x,y)}
\end{equation*}
does not have a real analytic extension to $\Omega$. Then there is a positive integer ${m_0}$ such that for every real analytic function $F:\Omega\rightarrow\R$ with 
$F\in\mf_{\R,2}^{{m_0}}$ and $(F)\OC^{\R}_2=I(\ZC(F))$     
the restriction $f|_{(\ZC(F)\setminus\{(0,0)\})\cap\Omega}$ does not have 
a real analytic extension to $\ZC(F)\cap\Omega$.
\end{lemma}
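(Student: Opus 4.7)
The plan is to argue by contradiction, reducing the extension problem to a divisibility question in the analytic local ring at the origin and closing with the Krull intersection theorem. Suppose no such $m_0$ exists, so for every positive integer $m$ we may pick a real analytic function $F_m\colon\Omega\to\R$ with $F_m\in\mf_{\R,2}^m$ and $(F_m)\OC_2^{\R}=I(\ZC(F_m))$ for which $f|_{(\ZC(F_m)\setminus\{(0,0)\})\cap\Omega}$ does admit a real analytic extension to $\ZC(F_m)\cap\Omega$.

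First I would convert each such extension into a divisibility identity of germs. By the definition of real analyticity on an analytic set, applied at the origin, the extension provides an open neighborhood $U$ of $(0,0)$ in $\R^2$ and a real analytic function $\tilde f_m\colon U\to\R$ with $h\tilde f_m=g$ on $(\ZC(F_m)\setminus\{(0,0)\})\cap U$, and hence on $\ZC(F_m)\cap U$ by continuity. Passing to germs at the origin, this says $g-h\tilde f_m\in I(\ZC(F_m))$; the hypothesis $(F_m)\OC_2^{\R}=I(\ZC(F_m))$ therefore yields a germ $R_m\in\OC_2^{\R}$ with
\begin{equation*}
g=h\tilde f_m+F_mR_m.
\end{equation*}

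Next I would pass to the quotient ring $A:=\OC_2^{\R}/(h)$. Since $h(0,0)=0$, we have $h\in\mf_{\R,2}$, so $A$ is a Noetherian local ring with maximal ideal $\mf_A:=\mf_{\R,2}/(h)$. Reducing the displayed identity modulo $(h)$ gives $\bar g=\bar F_m\,\bar R_m$ in $A$; the assumption $F_m\in\mf_{\R,2}^m$ forces $\bar F_m\in\mf_A^m$, whence $\bar g\in\mf_A^m$. Since this holds for every $m$, the Krull intersection theorem in the Noetherian local ring $A$ yields $\bar g\in\bigcap_{m\geq 1}\mf_A^m=0$, so $g\in(h)\OC_2^{\R}$. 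Writing $g=hu$ with $u\in\OC_2^{\R}$, the germ $u$ gives a real analytic extension of $g/h$ near $(0,0)$; combined with the real analyticity of $f$ on $\Omega\setminus\{(0,0)\}$, this extends $f$ analytically to all of $\Omega$, contradicting the hypothesis on $f$.

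The hard part --- and the precise reason for including the faithfulness-type condition $(F_m)\OC_2^{\R}=I(\ZC(F_m))$ in the statement --- is the very first step: upgrading the vanishing of the analytic function $g-h\tilde f_m$ on the curve $\ZC(F_m)$ into divisibility by $F_m$ itself, not merely membership in the a priori larger ideal $I(\ZC(F_m))$. Once this algebraization is in place, the reduction to $A$ and the invocation of Krull's theorem are routine; the lemma is therefore a fairly clean distillation of the statement that ``$f$ fails to be analytic at the origin'' descends to every sufficiently flat, ideal-theoretically well-behaved curve germ through the origin.
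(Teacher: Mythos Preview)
Your argument is correct and takes a genuinely different route from the paper. The paper first reduces to $g,h$ coprime, then complexifies: it picks an irreducible branch $(T,(0,0))$ of the complex zero set of $h$ on which $g$ does not vanish identically, takes a Puiseux normalization $\phi\colon(\CB,0)\to(T,(0,0))$, and sets $m_0$ larger than the order of $g\circ\phi$ at $0$. For a single $F$ as in the hypothesis, an analytic extension on $\ZC(F)$ yields $g-uh=wF$; composing with $\phi$ kills the $uh$ term and forces the order of $g\circ\phi$ to be at least that of $F\circ\phi\geq m_0$, a contradiction. Your approach instead runs a contradiction over all $m$, reduces each identity $g=h\tilde f_m+F_mR_m$ modulo $(h)$, and invokes the Krull intersection theorem in the Noetherian local ring $\OC_2^{\R}/(h)$ to conclude $g\in(h)$. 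This is purely real-algebraic and avoids complexification and Puiseux entirely; the price is that you obtain $m_0$ only non-constructively, whereas the paper produces an explicit bound. For the applications in the paper (one merely needs \emph{some} $m_0$ and then chooses primes $p>m_0$), your version suffices. One small point worth making explicit: your ``by continuity'' step uses that the germ of $\ZC(F_m)$ at the origin is not reduced to $\{(0,0)\}$; this is indeed forced by $(F_m)\OC_2^{\R}=I(\ZC(F_m))$ together with $F_m\in\mf_{\R,2}$, since $\mf_{\R,2}$ is not principal.
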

\begin{proof} By nonextendability of $f,$ the germ of $g$ at the origin is a nonzero element of $\OC^{\R}_2.$ After shrinking $\Omega$ and dividing $g$ and $h$ by their greatest common divisor we may assume that the germs of $g, h$ are relatively prime in $\OC^{\R}_2$ and, again by nonextendability of $f,$ we still have $h(0,0)=0.$  

Let $U$ be a polydisc about
the origin in $\mathbb{C}^2$ on which the complexification of $h$ is well defined.  
Set $V:=\{(x,y)\in U:h(x,y)=0\}$ and note that the germ of $V$ at the origin is of complex dimension~$1$ as $h(0,0)=0$. Since $g, h$ are relatively prime in $\OC^{\R}_2$, there exists an irreducible complex analytic curve-germ $(T,(0,0))$ contained in $(V,(0,0))$ such that $g$ does not vanish identically 
on $(T,(0,0)).$ Indeed, otherwise $g$ would vanish identically on $(V,(0,0))$. Hence, by the Nullstellensatz, there would be an integer $t$ and a $v\in\OC^{\mathbb{C}}_2$ with $g^t=vh.$ Since $g, h\in\OC^{\R}_2,$ we would have $v\in\OC^{\R}_2,$ contradicting the fact that $g, h$ are relatively prime in $\OC^{\R}_2.$ 

By the Puiseux theorem, we have a holomorphic normalization $\phi:(\mathbb{C},0)\rightarrow (T,(0,0)).$ Then $g\circ\phi:(\mathbb{C},0)\rightarrow(\mathbb{C},0)$
is a non-zero holomorphic function. Define ${m_0}$ to be any integer greater than the order of zero of $g\circ\phi$ at $0$.

Let $F:\Omega\rightarrow\R$ be a real analytic function satisfying the hypotheses of the lemma.
Suppose that $f|_{(\ZC(F)\setminus\{(0,0)\})\cap\Omega}$ has 
a real analytic extension to $\ZC(F)\cap\Omega.$ Then there is a $u\in\OC^{\R}_2$ such that $g-uh\in I(\ZC(F)).$ By assumption
there is a $w\in\OC^{\R}_2$ such that $g-uh=wF.$ Passing to the complexifications of $g, F$ and $w$ in a small neighborhood of the origin we obtain $g\circ\phi=(w\circ\phi)(F\circ\phi)$ in a neighborhood of $0.$
This is a contradiction because the order of zero at $0$ of the right-hand side of the latter equation is greater than that of the left-hand side. 
\end{proof}

\subsection{Generalizing to algebraic surfaces}\label{subsec:2.2}

To begin with we present the following.

\begin{lemma}\label{lem-2-13}
Let $X$ be an irreducible nonsingular real algebraic set in $\R^n$, with $\dim X = m \geq 1$, and let $a$ be a point in $X$. Then there exists a linear map $\pi \colon \R^n \to \R^m$ for which the following hold:
\begin{iconditions}
\item\label{lem-2-13-i} The restriction $\pi|_X \colon X \to \R^m$ is a proper map with finite (some possibly empty) fibers.

\item\label{lem-2-13-ii} The map $\pi|_X$ is transverse to $\pi(a)$.
\end{iconditions}
\end{lemma}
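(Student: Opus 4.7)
The plan is to construct $\pi$ as a sufficiently generic real linear projection, handling \ref{lem-2-13-i} via a Noether-normalization argument applied to the algebraic complexification of $X$, and \ref{lem-2-13-ii} via a transversality requirement at $T_aX$.

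First I would pass to the algebraic complexification $X^{\CB}\subset\CB^n$, an irreducible complex algebraic set of dimension $m$ defined over $\R$. Let $\overline{X^{\CB}}\subset\mathbb{P}^n(\CB)$ be its projective closure, $H_\infty\cong\mathbb{P}^{n-1}(\CB)$ the hyperplane at infinity, and $W:=\overline{X^{\CB}}\cap H_\infty$; then $\dim W\leq m-1$ and $W$ is defined over $\R$. For a real linear map $\pi\colon\R^n\to\R^m$, let $\pi_{\CB}\colon\CB^n\to\CB^m$ be its complexification and $L\subset H_\infty$ the projectivization of $\ker\pi_{\CB}$, a projective subspace of dimension $n-m-1$. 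The dimension count $\dim L+\dim W\leq(n-m-1)+(m-1)<n-1$ shows that a generic $L$ is disjoint from $W$. When $L\cap W=\emptyset$, the rational map $\mathbb{P}^n(\CB)\dashrightarrow\mathbb{P}^m(\CB)$ induced by $\pi_{\CB}$ restricts to a proper morphism $\overline{X^{\CB}}\to\mathbb{P}^m(\CB)$; combined with the classical Noether-normalization argument, this forces $\pi_{\CB}|_{X^{\CB}}\colon X^{\CB}\to\CB^m$ to be a finite morphism for a generic $\pi$. Restricting to real points gives a proper map $\pi|_X$ with finite (possibly empty) fibers, which is \ref{lem-2-13-i}.

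Second, for \ref{lem-2-13-ii}, since $\dim T_aX=m=\dim\R^m$, transversality of $\pi|_X$ to the point $\pi(a)$ is equivalent to $\pi|_{T_aX}$ being a linear isomorphism, equivalently $\ker\pi\cap T_aX=\{0\}$. This defines a non-empty Zariski-open condition on the space $\R^{mn}$ of real $m\times n$ matrices.

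The main obstacle I anticipate is descending the complex genericity from the first step to the real setting. The bad locus in $\CB^{mn}$ where $L\cap W\neq\emptyset$ (or where Noether normalization fails) is a proper Zariski-closed subset defined over $\R$, since $W$ and $X^{\CB}$ are defined over $\R$; because $\R^{mn}$ is Zariski-dense in $\CB^{mn}$, this bad locus cannot contain all of $\R^{mn}$, so its real complement is a non-empty Zariski-open subset. Intersecting with the Zariski-open condition from \ref{lem-2-13-ii} then yields a non-empty Zariski-open set of $\pi$ satisfying both requirements, and any such $\pi$ does the job.
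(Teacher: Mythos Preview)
Your treatment of \ref{lem-2-13-i} via Noether normalization on the complexification is fine and matches the paper's approach.

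The gap is in \ref{lem-2-13-ii}. ``$\pi|_X$ is transverse to $\pi(a)$'' means that $\pi(a)$ is a \emph{regular value} of $\pi|_X$: for \emph{every} $x\in(\pi|_X)^{-1}(\pi(a))$ the differential $d(\pi|_X)_x$ must be an isomorphism, not only for $x=a$. (This is exactly how the lemma is used: the paper needs the full fiber over $\pi(a)$ to consist of points at which $\pi|_X$ is a local diffeomorphism, so that $(\pi|_X)^{-1}(\Omega)=U_1\cup\cdots\cup U_s$ with each $\pi|_{U_l}$ a diffeomorphism.) Your condition $\ker\pi\cap T_aX=\{0\}$ is Zariski open, but it only controls the single point $a$; the other points of the fiber depend on $\pi$, so the full regularity condition is \emph{not} obviously Zariski open in $\pi$, and your intersection-of-open-sets argument does not close.

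The paper handles this with a parametric transversality trick. Translate so that $a=0$; then $\pi(a)=0$ for \emph{all} linear $\pi$, so the target value is fixed while $\pi$ varies. First pick (by your Zariski-open condition) some $\gamma$ in the Noether-normalization locus with $d(\gamma|_X)_0$ an isomorphism, and consider the family $\Phi(x,t)=\gamma(x)+t\cdot x$ for $t$ in a small ball $M_\varepsilon$ of $m\times n$ matrices. The map $\Phi|_{X\times M_\varepsilon}$ is a submersion: at $x\neq 0$ the $t$-derivative already has full rank, and at $x=0$ one uses that $d(\gamma|_X)_0$ is an isomorphism. Parametric Sard then gives some $t$ with $\Phi_t=\gamma+t$ transverse to $0\in\R^m$, and for small $\varepsilon$ this $\pi:=\gamma+t$ still lies in the Noether-normalization locus. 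Your argument can be repaired along exactly these lines; the missing ingredient is this Sard step (or, alternatively, a careful incidence-variety dimension count over $\CB$ showing that the locus of $\pi$ with a ramification point in the fiber over $0$ is proper).
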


\begin{proof}
{Let $Y \coloneqq X - a$ be the translate of $X$. 
Let $L(n,m)$ be the space of all linear maps from $\R^n$ to $\R^m.$
By a suitable version of the Noether normalization theorem, see \cite[Theorem~13.3]{bib12} and its proof, there is a nonempty 
Zariski open subset $\Omega$ of $L(n,m)$ such that every $\beta \in \Omega$ 
is a surjective map whose restriction $\beta|_Y$ is proper with finite fibers. Moreover, there is $\gamma \in \Omega$} such that
the derivative of $\varphi \coloneqq \gamma|_Y \colon Y \to \R^m$ at the origin $0 \in Y$ is an isomorphism. After a linear coordinate change, we may assume that $\gamma$ 
is the canonical projection $\R^n = \R^m \times \R^{n-m} \to \R^m$.

For any constant $\varepsilon > 0$, we set
\begin{equation*}
    {M_{\varepsilon} \coloneqq \{(t_{ij}) \in\R^{m\times n} : |t_{ij}| < \varepsilon \text{ for } 1 \leq i \leq m, \ 1 \leq j \leq n\}}
\end{equation*}
and consider the map {$\Phi \colon \R^n \times M_{\varepsilon} \to \R^m$} defined by
\begin{equation*}
    \Phi(x,t) = \left(x_1 + \sum_{j=1}^n t_{1j} x_j, \ldots, x_m + \sum_{j=1}^n t_{mj} x_j\right),
\end{equation*}
where {$x = (x_1, \ldots, x_n) \in \R^n$} and $t = (t_{ij}) \in M_{\varepsilon}$. If $\varepsilon$ is sufficiently small, 
{then $\Phi|_{Y\times M_{\varepsilon}}$} is a submersion since for every {point 
$x \in Y\setminus\{0\}$} the restriction of $\Phi$ to $\{x\}\times M_{\varepsilon}$ is a submersion, and $\varphi$ is a submersion at the origin $0 \in Y$. 
Hence, according to the standard consequence of Sard's theorem \cite[p.~48, Theorem~19.1]{bib1}, the map 
$\Phi_t \colon Y \to \R^m$, $\Phi_t(x) = \Phi(x,t),$ is transverse to the origin $0 \in \R^m$ for some $t \in M_{\varepsilon}$. 
{Finally, define $\pi \in L(n,m)$ by $\pi(x)=\Phi(x,t)$ and note that  $\pi|_Y = \Phi_t$.} If $\varepsilon$ is small, then $\pi$ belongs to 
$\Omega$ and has the required properties.
\end{proof}

For the sake of clarity we include the following observation.

\begin{lemma}\label{lem-2-14}
Let $a,b$ be two points in $\R^m$ and let $\varepsilon > 0$ be a constant. For the Euclidean norm on $\R^m$, if $\norm{a}^2 > \varepsilon$ and $\norm{b}^2 < \frac{\varepsilon}{9}$, then $\norm{a-b}^2 > \frac{4\varepsilon}{9}$.
\end{lemma}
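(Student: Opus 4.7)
The plan is a direct application of the reverse triangle inequality followed by squaring. First I would extract the linear bounds on the norms: from $\norm{a}^2 > \varepsilon$ I get $\norm{a} > \sqrt{\varepsilon}$, and from $\norm{b}^2 < \varepsilon/9$ I get $\norm{b} < \sqrt{\varepsilon}/3$. Next, by the reverse triangle inequality, $\norm{a-b} \geq \norm{a} - \norm{b} > \sqrt{\varepsilon} - \sqrt{\varepsilon}/3 = 2\sqrt{\varepsilon}/3$, which is positive, so squaring preserves the inequality and yields $\norm{a-b}^2 > 4\varepsilon/9$.

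There is essentially no obstacle: the statement is a quantitative triangle-inequality estimate tailored to the specific constants $\varepsilon$ and $\varepsilon/9$ that presumably arise later in Section~\ref{sec:3}. The only thing worth being careful about is that the reverse triangle inequality gives a meaningful lower bound only when $\norm{a} > \norm{b}$, which is guaranteed here by $\sqrt{\varepsilon} > \sqrt{\varepsilon}/3$.
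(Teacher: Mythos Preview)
Your proof is correct and is exactly the approach taken in the paper, which simply says the conclusion follows immediately from $\norm{a-b} \geq \norm{a} - \norm{b}$. (A minor aside: the specific constants are used in the proof of Proposition~\ref{prop-2-1} in Section~\ref{sec:2}, not Section~\ref{sec:3}.)
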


\begin{proof}
The conclusion follows immediately from the inequality $\norm{a-b} \geq \norm{a} - \norm{b}$.
\end{proof}

Finally, we are ready to complete the main task of this section.

\begin{proof}[Proof of Proposition~\ref{prop-2-1}]
{Our aim is to prove analyticity of $f$ in some neighborhood of $a$ in $U.$ Since the problem is local,}
we are allowed to shrink $U$ if convenient. We may assume that $k \geq 2$ and the point $a \in U$ is the origin in $\R^n$. By Lemma~\ref{lem-2-13}, there exists a linear map $\pi \colon \R^n \to \R^2$ such that the restriction $\pi|_X \colon X \to \R^2$ is a proper map with finite fibers and is transverse to $(0,0) \in \R^2$. After a linear coordinate change in $\R^n$ we may assume that $\pi \colon \R^n = \R^2 \times \R^{n-2} \to \R^2$ is the canonical projection. Now we choose an open neighborhood $\Omega \subset \R^2$ of $(0,0)$ such that
\begin{equation*}
    (\pi|_X)^{-1}(\Omega) = U_1 \cup \cdots \cup U_s,
\end{equation*}
where the $U_l$ are pairwise disjoint open subsets of $X$, each $\pi|_{U_l} \colon U_l \to \Omega$ is a real analytic diffeomorphism, and $a \in U_1 \subset U$. Let $(x,y,z)$, where $z = (z_1, \ldots, z_{n-2})$, be the variables in $\R^n$. The inverse map $(\pi|_{U_l})^{-1} \colon \Omega \to U_l$ is of the form $(x,y) \mapsto (x,y,\varphi^l(x,y))$, where
\begin{equation*}
    \varphi^l = (\varphi_1^l, \ldots, \varphi_{n-2}^l) \colon \Omega \to \R^{n-2}
\end{equation*}
is a real analytic map. Shrinking $U$ and $\Omega$ we may assume that $U_1 = U$ and, for some open subset $\tilde U$ of $\R^n$ 
and polynomial functions $P_1, \ldots, P_{n-2}$ on $\R^n$, { we have }
\begin{equation}\label{eq-1}
\begin{gathered}
    \det \left(\frac{\partial(P_1, \ldots, P_{n-2})}{\partial(z_1, \ldots, z_{n-2})} (b)\right) \neq 0 \quad \text{for all } b \in U,\\
    U = X \cap \tilde U = \ZC(P_1) \cap \cdots \cap \ZC(P_{n-2}) \cap \tilde U.
\end{gathered}
\end{equation}
By construction,
\begin{equation*}
    \tau \colon \Omega \to U, \quad (x,y) \mapsto (x,y, \varphi^1(x,y))
\end{equation*}
is the inverse of the map $\pi|_U \colon U \to \Omega$. Note that $\tau(0,0)=a,$ where $a$ is the origin in $\R^n$. 

There is an open disc $B\subset\R^2$ centered at $(0,0)$ such that $C_{p,q}\subset B$ for all primes $p<q.$ Rescaling the coordinates in $\R^2$ we may assume that $\Omega$ contains the closure $\bar B$.

{We prove that} the restriction $f|_{U \setminus \{a\}}$ has a real analytic extension to $U$. Suppose that this is not the case. We will construct an algebraic curve
$C\in\mathcal{F}^k(U)$ with a singular point at $a$ such that the restriction $f|_{C\setminus\{a\}}$ does not have a real analytic extension to $C.$ The existence of such a $C$ contradicts the hypothesis.

First note that $f \circ \tau|_{\Omega \setminus \{(0,0)\}}$ does not have a real analytic extension to $\Omega$ and
\begin{equation*}
    (f \circ \tau)(x,y) = \frac{(g \circ \tau)(x,y)}{(h \circ \tau)(x,y)} \quad \text{for all } (x,y) \in \Omega \setminus \{(0,0)\}.
\end{equation*}
Let ${m_0}$ be a positive integer provided by Lemma~\ref{lem-2-8} applied to $g\circ\tau$
and $h\circ\tau.$ Fix a pair $(p,q) \in \Lambda^k$ such that ${m_0}<p.$

The case $s=1$ is easy. Indeed, by Lemma~\ref{Rgenerate} (with $v=0$) and by Lemma~\ref{lem-2-8} the 
restriction $f\circ\tau|_{C_{p,q} \setminus \{(0,0)\}}$ does not have a real analytic extension to $C_{p,q}$. Next, by Lemma~\ref{lem-2-2}, we conclude that the algebraic curve $C = (\pi|_X)^{-1}(C_{p,q})$ is homeomorphic to the unit circle and $a$ is the unique
singular point of $C.$ Moreover, by Lemma~\ref{andiffeo}, the germ $C_a$ is $\R$-analytically equivalent to the germ of $E_{p,q}$ at $(0,0).$ Furthermore, the complex analytic set-germ of algebraic complexification of 
$C$ at $a$ is irreducible because, by Lemma~\ref{Kirreducible}, the complex analytic set-germ of algebraic complexification of 
$C_{p,q}$ at $(0,0)$ is irreducible, hence $C$ is faithful at $a.$ Thus $C$ is an element of the collection $\FC^k(U)$, with singular point at $a$, such that the restriction 
$f|_{C \setminus \{a\}}$ does not have a real analytic extension to $C$.

Henceforth we assume that $s \geq 2$. Choose a constant $\varepsilon > 0$ satisfying
\begin{equation}\label{eq-2}
    \inf_{2 \leq l \leq s} \inf_{(x,y) \in \bar B}
    \sum_{i=1}^{n-2} (\varphi_i^1(x,y) - \varphi_i^l(x,y))^2 > \varepsilon.
\end{equation}
By the Weierstrass approximation theorem, for $i=1,\ldots,n-2$, there exist polynomial functions $\Phi_i \colon \R^2 \to \R$ with $\Phi_i(0,0) = \varphi_i^1(0,0)=0$ and
\begin{equation}\label{eq-3}
    \sup_{(x,y)\in\bar B} \sum_{i=1}^{n-2} (\varphi_i^1(x,y) - \Phi_i(x,y))^2 < \frac{\varepsilon}{9}.
\end{equation}
Using \eqref{eq-2}, \eqref{eq-3} and Lemma~\ref{lem-2-14}, we get
\begin{equation}\label{eq-4}
    \inf_{2 \leq l \leq s} \inf_{(x,y) \in \bar B} \sum_{i=1}^{n-2} (\varphi_i^l(x,y) - \Phi_i(x,y))^2 > \frac{4\varepsilon}{9}.
\end{equation}

Since $C_{p,q}=\mathcal{Z}(F_{p,q})\subset B$ and $F_{p,q}>0$ on the unbounded component of $\R^2\setminus \mathcal{Z}(F_{p,q})$ we have
\begin{equation}\label{eq-5}
    F_{p,q} > 0 \quad \text{on } \R^2 \setminus B.
\end{equation}

Given a positive integer $r$, we define two functions $K_r$, $W_r$ on $\R^n$ by
\begin{align*}
    K_r(x,y,z) &= \left( \sum_{i=1}^{n-2} \frac{3}{\varepsilon} (z_i - \Phi_i(x,y))^2 \right)^r,\\
    W_r(x,y,z) &= F_{p,q}(x,y) + K_r(x,y,z).
\end{align*}
The curve $C$ will be of the form $\mathcal{Z}(W_r|_X)$ for $r$ large enough.

In view of \eqref{eq-4}, for $l = 2, \ldots, s$ and $(x,y) \in \bar B$, we have
\begin{equation}\label{eq-6}
    K_r(x,y,\varphi^l(x,y)) = 
    \left(\sum_{i=1}^{n-2} \frac{3}{\varepsilon} (\varphi_i^l(x,y) - \Phi_i(x,y))^2\right)^r 
    > \left(\frac 4 3 \right)^r.
\end{equation}
By \eqref{eq-3}, for $(x,y) \in \bar B$, we obtain
\begin{equation}\label{eq-7}
    K_r(\tau(x,y)) = 
    \left( \sum_{i=1}^{n-2} \frac{3}{\varepsilon} (\varphi_i^1(x,y) - \Phi_i(x,y))^2\right)^r <
    \left(\frac 1 3\right)^r.
\end{equation}
Since $W_r \circ \tau-F_{p,q} = K_r \circ \tau$ on $\Omega$, it readily follows from \eqref{eq-7} that the sequence 
$\{W_r \circ \tau|_{\bar B}\}$ converges to $F_{p,q}|_{\bar B}$ in the $\C^k$ topology. Moreover, the order
of zero of $K_r \circ \tau$ at the origin tends to infinity as $r$ grows. Consequently, by Lemma~\ref{remark-2-4} with $v_r=K_r\circ\tau$, there is a neighborhood 
of the origin in which, for $r$ large enough, $W_r\circ\tau$ is equivalent to $F_{p,q}$ by an analytic diffeomorphism $(\R^2,(0,0))\rightarrow(\R^2,(0,0))$. It follows that, for large $r,$ the gradient of $W_r\circ\tau$ does not
vanish at any point of $\mathcal{Z}(W_r\circ\tau|_{{B}})\setminus\{(0,0)\}$. In view of Lemma~\ref{lem-2-2}\ref{lem-2-2-iii} we conclude that $\mathcal{Z}(W_r\circ\tau|_{{B}})$ is a $\C^k$ submanifold of $B$. We prove
\begin{claim}\label{ClaimC} For $r$ large enough,
$\ZC(W_r \circ \tau|_B)$ is homeomorphic to $C_{p,q}$. 
\end{claim}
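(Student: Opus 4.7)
The plan is to verify that, for all sufficiently large $r$, $\ZC(W_r\circ\tau|_B)$ is both compact and connected. Since this set is already known to be a $\mathcal{C}^k$ $1$-submanifold of $B$, and every compact connected boundaryless $\mathcal{C}^k$ $1$-manifold is homeomorphic to the circle (as is $C_{p,q}$ by Lemma~\ref{lem-2-2}\ref{lem-2-2-iii}), this will complete the claim.

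\emph{Compactness.} By \eqref{eq-5}, $F_{p,q}$ is strictly positive on the compact set $\partial B$, hence bounded below by a positive constant on some open neighborhood $N$ of $\partial B$ in $\bar B$. The uniform convergence $W_r\circ\tau\to F_{p,q}$ on $\bar B$ then forces $W_r\circ\tau>0$ on $N$ for all $r$ large, so $\ZC(W_r\circ\tau|_B)\subset B\setminus N$ is compact. An analogous argument, using continuity of $F_{p,q}$ together with $\ZC(F_{p,q}|_{\bar B})=C_{p,q}$, yields the following localization: for any prescribed open neighborhood $\Omega'$ of $C_{p,q}$ in $\bar B$, one has $\ZC(W_r\circ\tau|_B)\subset\Omega'$ for $r$ large.

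\emph{Connectedness.} I would split $\ZC(W_r\circ\tau|_B)$ into a piece near the origin and a piece away from it. Near the origin, the application of Lemma~\ref{remark-2-4} invoked in the preceding paragraph supplies an open neighborhood $V$ of the origin and, for large $r$, an analytic diffeomorphism $H_r\colon V\to H_r(V)$ with $W_r\circ\tau=F_{p,q}\circ H_r$ on $V$ and $H_r\to\operatorname{id}$ in $\mathcal{C}^1$. Thus $\ZC(W_r\circ\tau)\cap V=H_r^{-1}(C_{p,q}\cap H_r(V))$; since $C_{p,q}$ is a $\mathcal{C}^k$ manifold through the origin, for $V$ small enough this intersection is a single open arc through the origin. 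Away from the origin, choose $V'$ with $\overline{V'}\subset V$: the set $C_{p,q}\setminus V'$ is a compact $\mathcal{C}^k$ arc on which $\nabla F_{p,q}$ is nowhere zero (by Lemma~\ref{lem-2-2}\ref{lem-2-2-ii} the origin is the only critical point of $F_{p,q}$ on $C_{p,q}$). Taking a thin tubular neighborhood $T\subset\bar B$ of $C_{p,q}\setminus V'$ on which $F_{p,q}$ is a submersion, the $\mathcal{C}^1$-convergence makes $W_r\circ\tau$ a submersion on $T$ for $r$ large, and the implicit function theorem in tubular coordinates identifies $\ZC(W_r\circ\tau)\cap T$ with the graph of a $\mathcal{C}^k$ section of the tubular projection over $C_{p,q}\setminus V'$, hence a closed arc diffeomorphic to $C_{p,q}\setminus V'$.

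The main obstacle I anticipate is the \emph{gluing}: one must arrange, by choosing the $\Omega'$ of the localization step inside $V\cup T$, that $\ZC(W_r\circ\tau|_B)=(\ZC(W_r\circ\tau)\cap V)\cup(\ZC(W_r\circ\tau)\cap T)$, and then verify that the endpoints on $\partial V'$ of the two arcs match correctly. Once this is in place, $\ZC(W_r\circ\tau|_B)$ is a single $\mathcal{C}^k$ circle, giving the required homeomorphism with $C_{p,q}$.
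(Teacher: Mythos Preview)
Your approach is correct and rests on the same two ingredients as the paper's proof: Lemma~\ref{remark-2-4} to control $\ZC(W_r\circ\tau)$ near the origin via the diffeomorphisms $H_r$, and the $\C^1$-convergence $W_r\circ\tau\to F_{p,q}$ together with a tubular-neighborhood/implicit-function argument away from the origin. The organizational difference is in how the homeomorphism is produced. You aim to prove compactness and connectedness separately and then invoke the classification of compact $1$-manifolds; this forces you to confront the gluing of the ``near-origin'' arc with the ``far-from-origin'' arc, which you correctly flag as the delicate step. The paper instead builds a single tubular projection $\eta\colon N\to C_{p,q}$ over all of $C_{p,q}$ (including the origin) and shows that $\eta|_{\ZC(W_r\circ\tau)}$ is a bijection: on each normal fiber there is exactly one zero, because the relevant defining function ($W_r\circ\tau$ away from the origin, $f\circ H_r$ near the origin, with $f$ a $\C^1$ defining function for $C_{p,q}$ on $V$) has derivative bounded below along the fiber. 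This global-projection packaging absorbs the gluing issue automatically, since the two local descriptions are patched by the single map $\eta$ rather than by matching arc-endpoints. Your route works, but the paper's avoids precisely the obstacle you anticipate.
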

\noindent\textit{Proof of Claim~\ref{ClaimC}.} First let us recall that for any one-dimensional $\mathcal{C}^2$ manifold
$M\subset\mathbb{R}^2,$ by a tubular neighborhood of $M$ we mean a pair
$(N,\eta),$ where $N$ is an open neighborhood of $M$ and $\eta:N\rightarrow M$ is a $\mathcal{C}^1$ map such for
every $x\in M$, $\eta^{-1}(x)$ is a segment normal to $M$ at $x$. It is clear that for every $x_0\in M$ there is an open
neighborhood $E$ in $M$ admitting a tubular neighborhood $(N_E,\eta_E)$ in $\mathbb{R}^2$ of constant radius (that
is, there is a $c>0$ such that $\eta_E^{-1}(x)$ is a segment centered at $x$ of length $c$ for every
$x\in E$).

Let us show that $C_{p,q}$ is homeomorphic to $\mathcal{Z}(W_r\circ\tau)$ for $r$ large enough. Recall that the
gradient of $F_{p,q}$ does not vanish at any $b\in C_{p,q}\setminus\{(0,0)\}.$ Hence for every $b\in
C_{p,q}\setminus\{(0,0)\}$ there are an
$\varepsilon_b>0$ and a tubular neighborhood $(N_b,\eta_b)$ of $N_b\cap C_{p,q}\ni b$ of constant radius such that
\vspace*{2mm}\\
(*)\hspace*{5mm}for every $x\in N_b\cap C_{p,q}$ we have $\inf|(F_{p,q}|_{\eta_b^{-1}(x)})'|>\varepsilon_b.$\vspace*{2mm}\\
We may assume that $W_r\circ\tau$ converges to $F_{p,q}$ uniformly together with the first order partial derivatives on
every $N_b.$

By Lemma~\ref{remark-2-4} with $v_r=K_r\circ\tau$, there are an open neighborhood $V$ of the origin and a sequence $H_r:V\rightarrow H_r(V)$ of analytic diffeomorphisms
converging uniformly to the identity on $V$ together with the first order partial derivatives such that $F_{p,q}\circ
H_r=W_r\circ\tau$. We may assume, shrinking $V$ if necessary, that there is a $\mathcal{C}^1$
function $f:V\rightarrow\mathbb{R}$ whose gradient does not vanish at any point of $V$ such that
$\mathcal{Z}(f)=C_{p,q}\cap V.$ Therefore there are an $\varepsilon_{(0,0)}>0$ and a tubular neighborhood
$(N_{(0,0)},\eta_{(0,0)})$ of $N_{(0,0)}\cap C_{p,q}\ni (0,0)$ of constant radius such that\vspace*{2mm}\\
(**)\hspace*{5mm}for every $x\in N_{(0,0)}\cap C_{p,q}$ we have $\inf|(f|_{\eta_{(0,0)}^{-1}(x)})'|>\varepsilon_{(0,0)}.$\vspace*{2mm}

As $C_{p,q}$ is compact, there are $b_1,\ldots,b_s\in C_{p,q}$ such that $C_{p,q}\subset\bigcup_{j=0}^sN_{b_j},$
where $b_0=(0,0).$ Define $f_{r,j}$ on $N_{b_j}$ to be $f\circ H_r$ if $j=0$ and $W_r\circ\tau$ if $j=1,\ldots,s.$
Set $\varepsilon:=\min_{j=0,\ldots,s}\varepsilon_{b_j}.$ Since $f_{r,0}$ converges to $f|_{N_{b_0}}$ and $f_{r,j}$
converges to $F_{p,q}|_{N_{b_j}}$, for $j=1,\ldots,s,$ we conclude that for
$r$ large enough, by (*) and (**), \vspace*{2mm}\\
\hspace*{13mm}for every $j=0,\ldots,s$ and $x\in N_{b_j}\cap C_{p,q}$ we have $\inf|(f_{r,j}|_{\eta_{b_j}^{-1}(x)})'|>\varepsilon/2.$\vspace*{2mm}\\
Consequently, for every $j=0,\ldots,s$ and $x\in N_{b_j}\cap C_{p,q}$ the function $f_{r,j}$ has precisely one
zero on ${\eta_{b_j}^{-1}(x)}.$ On the other hand, $\mathcal{Z}(f_{r,j})=\mathcal{Z}(W_r\circ\tau)\cap N_{b_j},$
for $j=0,\ldots,s.$ Therefore if $(N,\eta)$ is a tubular neighborhood of $C_{p,q}$ with
$N\subset\bigcup_{j=0}^sN_{b_j}$, then for $r$ large enough,
$\eta|_{\mathcal{Z}(W_r\circ\tau)}:\mathcal{Z}(W_r\circ\tau)\rightarrow C_{p,q}$ is injective. Clearly, this map
is also surjective, hence it is a homeomorphism in view of compactness of its domain.\qed\vspace*{3mm}

We proceed with the proof of Proposition~\ref{prop-2-1}. Let $r_0$ be a positive integer such that for $r\geq r_0$ the 
gradient of $W_r\circ\tau$ does not vanish at any point of $\mathcal{Z}(W_r\circ\tau|_B)\setminus\{(0,0)\}$ and 
$\mathcal{Z}(W_r\circ\tau|_B)$ is a $\mathcal{C}^k$ submanifold of $B$ homeomorphic to $C_{p,q}.$
In view of \eqref{eq-5}, we have
\begin{equation*}
    W_r > 0 \quad \text{on } \R^n \setminus \pi^{-1}(B)
\end{equation*}
for all $r$. In particular,
\begin{equation*}
    \ZC(W_r \circ \tau|_B) = \ZC(W_r \circ \tau).
\end{equation*}
Moreover, using \eqref{eq-6} and increasing $r_0$ if necessary, we get
\begin{equation*}
    W_r > 0 \quad \text{on } (\pi|_X)^{-1}(\bar B) \setminus U
\end{equation*}
for $r \geq r_0$. The algebraic curve $C_r := \ZC(W_r|_X)$ {satisfies}
\begin{equation}\label{eq-8}
    C_r \subset U \quad \text{and} \quad \pi(C_r) = (\pi|_U)(C_r) = \ZC(W_r \circ \tau)
\end{equation}
for $r \geq r_0$. Since $\pi|_U \colon U \to \Omega$ is a real analytic diffeomorphism, the set $C_r$ is a $\C^k$ submanifold of $U$, diffeomorphic to $\ZC(W_r \circ \tau)$ 
(hence diffeomorphic to $C_{p,q}$ which in turn is diffeomorphic to the unit circle by Lemma~\ref{lem-2-2}).

By nonvanishing of the gradient of $W_r \circ \tau$ at any point of 
$\ZC(W_r \circ \tau) \setminus \{(0,0)\}$ for $r\geq r_0,$ and by \eqref{eq-1}, \eqref{eq-8} and the fact that $\pi|_U \colon U \to \Omega$ is a real analytic diffeomorphism, we obtain
\begin{equation*}
    \rank \left(\frac{\partial (W_r, P_1, \ldots, P_{n-2})}{\partial (x,y, z_1, \ldots, z_{n-2})} (b)\right) = n-1
\end{equation*}
for all points $b \in C_r \setminus \{a\}$. Consequently, the algebraic curve $C_r$ is nonsingular at every point $b \in C_r \setminus \{a\}$ if $r\geq r_0$.

We may assume that $r_0>2q.$ From now on we work with $r$ satisfying $r \geq r_0$.  Let us show that $C_r$ is an element of $\FC^k(U).$
By Lemma \ref{andiffeo} with $v=K_r\circ\tau,$
there is an analytic diffeomorphism $G:(\R^2,(0,0))\rightarrow(\R^2,(0,0))$ such that
$W_r \circ \tau=H_{p,q}\circ G.$ Consequently, the germ of the algebraic curve $C_r=\mathcal{Z}(W_r|_X)$ at $a$  is $\R$-analytically equivalent to the germ of $E_{p,q}$ at $(0,0).$
In particular, $a$ is a singular point of $C_r.$ Now, it remains to check that the analytic germ  of the algebraic complexification of $C_r$ at $a$ is irreducible, which implies that $C_r$ is faithful at $a$. 

Let $X^{\mathbb{C}}\subset\mathbb{C}^n$ be the algebraic complexification of $X.$ The analytic germ of the complex curve $$S:=\{(x,y,z)\in X^{\mathbb{C}}: W_r(x,y,z)=0 \}$$ at $a$ 
is irreducible. Indeed, $S_a$ is equivalent via the complexification $\tau_{\mathbb{C}}$ of the analytic diffeomorphism $\tau$ to the germ  
of $$T:=\{(x,y)\in\mathbb{C}^2: F_{p,q}(x,y)+K_r(\tau_{\mathbb{C}}(x,y))=0\}$$ at the origin. Hence, $S_a$ is irreducible as $T_{(0,0)}$ is irreducible
by Lemma~\ref{Kirreducible}. Since $C_r\subset S$ we conclude that $C_r$ is an element of $\FC^k(U).$

Finally, let us check that the restriction $f|_{C_r\setminus\{a\}}$ does not have a real analytic extension to $C_r$. First, by Lemma \ref{Rgenerate} with $v=K_r\circ\tau$ we get $I(\mathcal{Z}(W_r\circ\tau))=(W_r\circ\tau)\mathcal{O}^{\R}_2.$ Then, by Lemma~\ref{lem-2-8} applied to $f\circ\tau|_{\Omega\setminus\{(0,0)\}}$ with $F=W_r\circ\tau,$ we obtain that the restriction $f\circ\tau|_{\mathcal{Z}(W_r\circ\tau)\setminus\{(0,0)\}}$
does not have a real analytic extension to $\mathcal{Z}(W_r\circ\tau)$. Consequently, by \eqref{eq-8}, the restriction $f|_{C_r\setminus\{a\}}$ does not have a real analytic extension to $C_r$. We complete the construction of $C$ by  defining $C:=C_r$ for some $r\geq r_0$.

In conclusion, the restriction $f|_{U \setminus \{a\}}$ has a real analytic extension to $U$, say, $F \colon U \to \R$. The final 
task is to show that $f(a) = F(a)$. {To do this, it is sufficient to prove that there exists an algebraic curve $C$
in the collection $\mathcal{F}^k(U)$ with singular point at $a.$ Indeed, for such a curve, the restrictions $f|_C$, $F|_C$ are real analytic functions 
which agree on $C \setminus \{a\}$, hence we get $f(a) = F(a)$, as required. One can obtain $C$ starting with $C_{p,q}$ for arbitrary  
$(p,q) \in \Lambda^k,$ and repeating the arguments used in the construction of the curve $C_r$ above.}

\end{proof}

\section{Proofs of the main theorems}\label{sec:3}

Let $X$ be an irreducible nonsingular algebraic set in $\R^n$ and let $f \colon U \to \R$ be a function defined on a nonempty open subset $U$ of $X$. We say that $f$ admits a \emph{rational representation} if there exist two polynomial functions $G,H \colon \R^n \to \R$ such that
\begin{equation*}
    f(x) = \frac{G(x)}{H(x)} \quad \text{for all } x \in U \setminus \ZC(H)
\end{equation*}
and $H$ is not identically $0$ on $X$ (no restriction on the values of $f$ on the set $U \cap \ZC(H)$ is imposed).

We will repeatedly make use of the following fact.

\begin{lemma}\label{lem-3-1}
With notation as above, if $f$ is a real analytic function that admits a rational representation, then $f$ is a regular function.
\end{lemma}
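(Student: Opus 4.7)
The plan is to verify that $f$ is regular at every point $a\in U$ by locally simplifying the given rational representation. At any $a\in U\setminus\ZC(H)$ the representation $G/H$ itself witnesses regularity at $a$, so the substantive case is $a\in U\cap\ZC(H)$.

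The first step is to pass to the local ring $R:=\mathcal{O}_{X,a}$ of germs of regular functions on $X$ at $a$. Since $X$ is nonsingular of dimension $m=\dim X$, $R$ is a regular local ring of dimension $m$, hence a unique factorization domain. Let $g,h\in R$ denote the germs of $G|_X,H|_X$; then $h\neq 0$, because $H$ is not identically zero on the irreducible set $X$. Factoring out the gcd, write $g=G^*d$ and $h=H^*d$ with $G^*,H^*\in R$ coprime. Continuity of $f$ on $U$ upgrades the identity $fH=G$ from the dense open set $U\setminus\ZC(H)$ to a neighborhood of $a$ in $U$, yielding $fH^*=G^*$ as analytic germs at $a$.

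The decisive step is to establish $H^*(a)\neq 0$. For this I would invoke the analytic local ring $R^{\mathrm{an}}:=\mathcal{O}^{\mathrm{an}}_{X,a}$ and the classical facts that $R$ and $R^{\mathrm{an}}$ share a common completion $\widehat R\cong\R[[t_1,\dots,t_m]]$ and that $R\hookrightarrow\widehat R$ is faithfully flat. When $m\geq 2$, coprimality of $G^*,H^*$ in the UFD $R$ means the ideal $(G^*,H^*)R$ has height $\geq 2$; faithful flatness preserves heights, so $(G^*,H^*)\widehat R$ also has height $\geq 2$, which in the UFD $\widehat R$ rules out any common irreducible factor of $G^*$ and $H^*$. (The case $m=1$ is immediate, $R$ being a DVR in which coprimeness already forces $H^*$ to be a unit.) Since $f$ is analytic at $a$, we have $f\in R^{\mathrm{an}}\subset\widehat R$, and $fH^*=G^*$ in $\widehat R$ shows that $H^*$ divides $G^*$ in $\widehat R$; combined with the absence of common factors this forces $H^*$ to be a unit of $\widehat R$, equivalently $H^*(a)\neq 0$.

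Finally, one lifts the germ-level identity $f=G^*/H^*$ to a polynomial representation. Write $G^*=\widetilde\Phi/\widetilde\Psi$ and $H^*=\widetilde P/\widetilde Q$ with polynomials on $\R^n$ such that $\widetilde\Psi(a),\widetilde Q(a),\widetilde P(a)$ are all nonzero (the last by $H^*(a)\neq 0$), and set $\Phi:=\widetilde\Phi\,\widetilde Q$ and $\Psi:=\widetilde\Psi\,\widetilde P$. Then $\Psi(a)\neq 0$, and $\Phi/\Psi=G^*/H^*=f$ in the function field of $X$; the pointwise identity $f(x)=\Phi(x)/\Psi(x)$ then holds on $U\setminus\ZC(\Psi)$ by continuity of both sides and their agreement on the dense open subset $U\setminus\ZC(\Psi H)$. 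The main technical hurdle I anticipate is the height-preservation step: faithful flatness of the map $R\to\widehat R$ is precisely what prevents a spurious common factor of $G^*$ and $H^*$ from emerging in the analytic setting, and without it the simplified denominator $H^*$ might still vanish at $a$.
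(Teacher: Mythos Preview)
Your proof is correct and rests on the same key idea as the paper's: faithful flatness of a suitable extension of the regular local ring $\mathcal{O}_{X,a}$ forces the simplified denominator $H^*$ to be a unit. The paper's one-line proof invokes faithful flatness of $\mathcal{O}^{\mathrm{an}}_{X,a}$ over $\mathcal{O}_{X,a}$ directly (citing Malgrange), whereas you route through the completion $\widehat{\mathcal{O}_{X,a}}$ together with the inclusion $\mathcal{O}^{\mathrm{an}}_{X,a}\subset\widehat{\mathcal{O}_{X,a}}$; this has the mild advantage that faithful flatness of the completion is elementary Noetherian commutative algebra, so you avoid the deeper analytic input, at the cost of a slightly longer argument via heights (your height-preservation step is justified by going-down for flat maps, or equivalently by noting that the regular sequence $G^*,H^*$ stays regular under flat base change).
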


\begin{proof}
This is the case since for each point $x \in U$, the ring of germs of real analytic functions at $x$ is faithfully flat over the ring of germs of regular functions at $x$ (the latter assertion follows from \cite[Chap.~III, Proposition~4.10]{bib24}).
\end{proof}

\subsection{Restrictions to algebraic curves}\label{subsec:3.1}

First we give a different proof of the result established in \cite[Corollary 5.2]{bib16}.
\begin{theorem} \label{newproof}
Let $f:U\rightarrow\R$ be a function defined on a connected open set $U$ in $\R^n.$ Assume that the restriction of $f$ is a regular function on each open interval contained in $U$ and parallel to one of the coordinate axes. Then there exist two polynomial functions $G, H: \R^n\rightarrow\R$ such that $$\mathrm{codim} P\geq 2\mbox{ and }f(x)=\frac{G(x)}{H(x)}\mbox{ for all }x\in U\setminus P,$$ where $P:=U\cap\mathcal{Z}(H).$  
\end{theorem}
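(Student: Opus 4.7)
The plan is to proceed by induction on $n$. The base case $n=1$ is immediate: the hypothesis asserts that $f$ is regular on the open interval $U$, and \cite[Proposition~3.2.3]{bib4} produces polynomials $G,H$ with $H$ nowhere vanishing on $U$, giving $P=\emptyset$.

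For the inductive step with $n\geq 2$, I would fix a point $a\in U$ and work locally on a polydisc $W=I_1\times\cdots\times I_n\subset U$ containing $a$; since $U$ is connected, a rational representation valid on $W$ extends uniquely by rigidity of rational functions. For each $t\in I_n$, the slice $f_t\colon(x_1,\ldots,x_{n-1})\mapsto f(x_1,\ldots,x_{n-1},t)$ is separately regular on the polydisc $I_1\times\cdots\times I_{n-1}$, so by the induction hypothesis there exist coprime polynomials $G_t,H_t\in\R[x_1,\ldots,x_{n-1}]$ with $f_t=G_t/H_t$ off a set of codimension $\geq 2$.

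The crux is to arrange $G_t,H_t$ to depend polynomially on $t$. I would first establish a uniform degree bound via a Baire-category argument: the sets $E_d\subset I_n$ consisting of those $t$ for which $f_t$ admits a representation with $\max(\deg G_t,\deg H_t)\leq d$ cover $I_n$, so (after passing to closures if needed) some $E_d$ contains a nonempty sub-interval $J$; shrinking $I_n$ to $J$, the degrees are uniformly bounded by some integer $d$. I would then fix finitely many points $b_1,\ldots,b_N\in I_1\times\cdots\times I_{n-1}$ in sufficiently general position that any rational function in $n-1$ variables of bidegree $(d,d)$ is determined (up to a common scalar) by its values at $b_1,\ldots,b_N$. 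By hypothesis each map $t\mapsto f(b_j,t)$ is regular in $t$; classical polynomial interpolation in the first $n-1$ variables then expresses the coefficients of $G_t,H_t$ (in a suitable normalization) as explicit rational functions of the $f(b_j,t)$, hence as rational functions of $t$ alone. Clearing denominators yields polynomials $G,H\in\R[x_1,\ldots,x_n]$ with $f=G/H$ on a dense open subset of $W$.

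The main obstacle will be making the uniform degree bound rigorous, since the sets $E_d$ are not obviously closed; one must either verify closedness directly, or work with closures and then argue that the degree bound genuinely holds on an interior sub-interval. Once the polynomial representation is secured on $W$, I would propagate the identity $f=G/H$ to all of $U\setminus\mathcal{Z}(H)$ using the one-variable rational structure: for any $x_0\in U\setminus\mathcal{Z}(H)$ and any coordinate-parallel line $L$ through $x_0$ that meets $W$, both $f|_L$ and $(G/H)|_L$ are univariate rational functions agreeing on an infinite set, hence agreeing everywhere. Finally, after cancelling any common factors of $G$ and $H$, the condition $\mathrm{codim}\,P\geq 2$ follows because a codimension-$1$ component of $\mathcal{Z}(H)\cap U$ would force $f$ to blow up along a coordinate-parallel line meeting that component transversally, contradicting the regularity hypothesis on that line.
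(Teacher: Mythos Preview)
Your route is quite different from the paper's. The paper does not induct on $n$ or use Baire category; instead it invokes \cite[Proposition~2.4]{bib23} to produce an open cube $W\subset U$ on which $f$ is Nash (hence real analytic), then \cite[p.~201, Theorem~5]{bib10} together with Lemma~\ref{lem-3-1} to upgrade $f|_W$ to a regular function, yielding coprime polynomials $G,H$ with $f=G/H$ on $W$. The propagation of the identity $Hf=G$ to all of $U$ (by chaining axis-parallel segments, using that both sides are separately rational) and the $\codim P\geq 2$ argument then proceed essentially as in your last two steps. So the paper outsources the hard local work to two black boxes and obtains a short proof; your inductive scheme would be more self-contained if it goes through, and is close in spirit to what \cite{bib23} itself does.

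The step to flag is not the one you single out. Closedness of $E_d$ follows readily: each $t\mapsto f(b,t)$ is regular on $I_n$ and hence continuous, so a pointwise limit of rational functions of bidegree at most $(d,d)$ in the remaining variables is again of that form (normalize coefficients to a sphere, pass to a convergent subsequence, and rule out the degenerate limit $H\equiv 0$ by boundedness of $f$). The genuine difficulty is the multivariate rational interpolation. The claim that finitely many points $b_1,\ldots,b_N$ in general position determine a rational function $G/H$ of bidegree $(d,d)$ up to scalar is \emph{not} classical polynomial interpolation; it is the assertion that the linear system $G(b_j)-f_t(b_j)H(b_j)=0$ in the $2\binom{n-1+d}{n-1}$ unknown coefficients of $(G,H)$ has a one-dimensional solution space. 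This system always contains the true $(G_t,H_t)$, but for $n-1\geq 2$ it may also contain spurious pairs $(G',H')$ with $H'$ vanishing at many of the $b_j$, and nothing you have said forces the rank to be maximal uniformly in $t$. One must argue that for a generic choice of the $b_j$ the rank is maximal for generic $t$, extract the solution via a fixed maximal minor (giving coefficients rational in the $f(b_j,t)$, hence in $t$), and then treat the nowhere-dense exceptional $t$'s separately. This can be made to work, but it is the real content of the argument and is not covered by the phrase ``classical polynomial interpolation''.
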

\begin{proof}
By \cite[Proposition 2.4]{bib23}, there is an open cube $W$ contained in $U$ such that the function $f|_W$ is Nash, so analytic, as a function of $n$ variables. Hence, according to \cite[p. 201, Theorem 5]{bib10}, $f|_W$ admits a rational representation. In view of Lemma~\ref{lem-3-1}, $f|_W$ is actually a regular function. Consequently, there exist two polynomial functions $G, H$ on $\R^n$ such that they have no common factor, $W\subset\R^n\setminus\mathcal{Z}(H)$ and $f(x)={G(x)}/{H(x)}$ for all $x\in W.$

Setting $R:={G}/{H}$ and $P:=U\cap\mathcal{Z}(H)$, we claim that $f=R$ on $U\setminus P.$
Obviously, any two points of $U$ can be connected by a continuous arc composed of closed intervals in $U,$ each of which is parallel to one of the coordinate axes. Since the functions $Hf,$ $G$ on $U$ are analytic with respect to each variable separately and are identical on $W,$ it readily follows that $Hf=G$ on $U.$ This proves the claim. 

It remains to prove that $\mathrm{codim} P\geq 2.$ Suppose that $\mathrm{codim} P=1.$ Then there is a nonsingular point $x\in P$ with $G(x)\neq 0$ and $H(x)=0;$ otherwise $G$ and $H$ would have a common factor. Let $I$ be an open interval in $U$ such that $I$ is parallel to one of the coordinate axes, $I$ is not contained in $P,$ and $x\in I.$ Then $f|_I$ is a regular function satisfying 
$$(f|_I)|_{I\setminus P}=f|_{I\setminus P}=R|_{I\setminus P}.$$
We get a contradiction since $R|_{I\setminus P}$ is unbounded near $x.$
\end{proof}

The following result will also be useful.

\begin{theorem}\label{th-3-2}
Let $f \colon \R^n \to \R$, with $n \geq 2$, be a function whose restriction to every Euclidean circle in $\R^n$ is a regular function. Then $f$ admits a rational representation. Furthermore, there exist two polynomial functions $G,H \colon \R^n \to \R$ such that
\begin{equation*}
    \codim \ZC(H) \geq 2 \quad \text{and} \quad f(x) = \frac{G(x)}{H(x)} \quad \text{for all } x \in \R^n \setminus \ZC(H).
\end{equation*}
\end{theorem}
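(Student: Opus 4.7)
The plan is to reduce Theorem~\ref{th-3-2} to Theorem~\ref{newproof}. Once it is shown that $f$ is regular on every open interval in $\R^n$ parallel to a coordinate axis, Theorem~\ref{newproof} applied with $U=\R^n$ immediately supplies polynomials $G, H$ with $\codim \ZC(H) \geq 2$ and $f = G/H$ off $\ZC(H)$, which is precisely the conclusion sought. In particular the ``furthermore'' part of the statement will come for free from Theorem~\ref{newproof}, so everything reduces to producing regularity on coordinate-axis-parallel lines.

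The first reduction is to the two-dimensional case. Given an open interval $I \subset \R^n$ parallel to $e_1$, I would fix any 2-plane $P \subset \R^n$ containing $I$ (for definiteness, the translate of $\mathrm{span}(e_1, e_2)$ passing through $I$). Every Euclidean circle contained in $P$ is also a Euclidean circle in $\R^n$, so the restriction $f|_P \colon P \cong \R^2 \to \R$ inherits the hypothesis: its restriction to every Euclidean circle in $P$ is regular. Thus it suffices to prove the following two-dimensional claim: a function $g \colon \R^2 \to \R$ that is regular on every Euclidean circle in $\R^2$ is regular on every affine line in $\R^2$.

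For the 2D claim, fix a line $L \subset \R^2$; without loss of generality $L = \{y=0\}$. The strategy is to exhibit a rational representation of $g$ on a neighborhood $V$ of $L$, from which regularity on $L$ follows. I would use a rich family of Euclidean circles whose union covers $V$: for example, the pencil of circles through two fixed points $p, q \in L$ sweeps out $\R^2 \setminus L$ as the pencil parameter varies, and on each such circle $g$ has a rational expression $g|_C = P_C/Q_C$. Varying $p, q$ along $L$ provides additional pencils that collectively control a neighborhood of $L$. A Hartogs-type compatibility argument, leveraging the algebraic structure of the pencil and the fact that $g$ is a single fixed global function, should combine these rational pieces into one rational function on $V$. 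Once this is done, $g$ is real-analytic on $V$ and admits a rational representation, so Lemma~\ref{lem-3-1} delivers regularity of $g$ on $V$, in particular on $L$. Theorem~\ref{newproof} then finishes the argument as indicated above.

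The main obstacle is the uniformity of the rational data $(P_C, Q_C)$ across the family of circles: a priori the degrees of the numerators and denominators could be unbounded as $C$ varies, whereas the conclusion demands a single pair of polynomials. Establishing this uniformity---essentially, promoting rational data on a 1-parameter family of curves into a rational function on a 2-dimensional region---is the technical heart of the proof and uses in an essential way the algebraic structure of pencils of Euclidean circles together with the fact that $g$ is a globally defined function, forcing the per-circle expressions to be compatible.
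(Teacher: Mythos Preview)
Your overall plan---reduce to Theorem~\ref{newproof} by showing that $f$ is regular on every coordinate-axis-parallel interval---would indeed finish the proof, and the reduction to the planar claim is fine. The gap is exactly where you locate it yourself: the ``Hartogs-type compatibility argument'' that is supposed to glue the per-circle rational data $(P_C,Q_C)$ into a single rational function on a $2$-dimensional neighborhood $V$ of $L$ is never supplied, and the obstacle you name (no a priori bound on $\deg P_C,\deg Q_C$ as $C$ varies in the pencil) is a genuine one. Nothing in the hypotheses prevents these degrees from growing without bound, and a pencil of circles carries no extra structure that forces uniformity on its own; extracting a rational function on an open planar region from per-curve rational data of uncontrolled degree is essentially the content of the theorem you are trying to prove. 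As written, the proposal reduces the theorem to a statement at least as hard as itself.

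The paper sidesteps this difficulty with a single geometric move: inversion $\mu(x)=x/\norm{x}^2$, a biregular involution of $\R^n\setminus\{0\}$, sends each affine line disjoint from $\bar\B^n$ to a Euclidean circle through the origin in $\B^n$. Hence $g\coloneqq f\circ\mu$ is automatically regular on every coordinate-axis-parallel line meeting the cube $(2,3)^n$, so Theorem~\ref{newproof} applies to $g$ there and yields regularity of $g$---and therefore of $f$---on some nonempty open set $U$. One then writes $f=G/H$ on $U$ with $G,H$ coprime and propagates the identity $Hf=G$ to all of $\R^n\setminus\ZC(H)$ by the identity principle along a single circle through a given point and meeting $U$; the codimension estimate follows by the same contradiction argument as in Theorem~\ref{newproof}, with a circle through the bad point replacing an interval. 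Note that the paper never establishes your planar claim (regularity of $f$ on lines) at all: it only needs regularity of $f$ on \emph{one} open set, and inversion converts the circle hypothesis into precisely the input Theorem~\ref{newproof} asks for.
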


\begin{proof}
To begin with we prove that the restriction of $f$ to some nonempty open subset $U$ of $\R^n$ is a regular function. Let $\B^n$ denote the open unit ball in $\R^n$. Inversion
\begin{equation*}
    {\mu \colon \R^n \setminus \{0\} \to \R^n \setminus \{0\}}, \quad x \mapsto \frac{x}{\norm{x}^2}
\end{equation*}
is a biregular isomorphism. It maps $\B^n \setminus \{0\}$ onto the complement $\R^n \setminus \bar \B^n$ of the closed unit 
ball $\bar \B^n$ and gives a one-to-one correspondence between the Euclidean circles in $\B^n$ passing through the origin and 
the affine lines in $\R^n$ that are disjoint from $\bar\B^n$. Obviously, every affine line $L$ in $\R^n$ that is parallel to one 
of the coordinate axes and has a nonempty intersection with the open cube $T = (2,3)^n$ in $\R^n$ is disjoint from $\bar \B^n$. According to Theorem~\ref{newproof} and the assumption on $f,$ the restriction of $g:=f\circ\mu$ to some nonempty open subset $W$ of $\R^n\setminus\bar\B^n$ is a regular function. Consequently, $f|_U$ is a regular function, where $U:=\mu(W).$ The rest of the proof is quite similar to that of Theorem~\ref{newproof}.

In view of regularity of $f|_U$, there exist two polynomial functions $G,H$ on $\R^n$ such that they have no common factor, $U \subset \R^n \setminus \ZC(H)$, and $f(x) = G(x)/H(x)$ for all $x \in U$. Setting $R = G/H$ and $P = \ZC(H)$, we claim that $f = R$ on $\R^n \setminus P$. To this end, let $x$ be a point in $\R^n \setminus P$ and let $C$ be a Euclidean circle in $\R^n$ such that $x \in C$ and $C \cap U \neq \varnothing$. The functions $f|_{C \setminus P}$ and $R|_{C \setminus P}$ are regular and agree on the nonempty open subset $U \cap C$ of $C$. Hence $f(x) = R(x)$, which proves the claim.

It remains to prove that $\codim P \geq 2$. Suppose that $\codim P = 1.$ {Then there is a point $x\in P$ such that 
$G(x)\neq 0$ and $H(x)=0$; otherwise $G$ and $H$ would have a common factor. Let} 
$E$ be a Euclidean circle in $\R^n$ {not contained in $P$ such that $x\in E$.} 
Then $f|_E$ is a regular function satisfying
\begin{equation*}
    (f|_E)|_{E \setminus P} = f|_{E \setminus P} = R|_{E \setminus P}.
\end{equation*}
{This is a contradiction since $R|_{E\setminus P}$ is unbounded near $x$.}
\end{proof}

\begin{proof}[Proof of Theorem~\ref{th-1-4}]
As already noted in Section~\ref{sec:1}, it is sufficient to prove \ref{th-1-4-d}$\Rightarrow$\ref{th-1-4-a} and \ref{th-1-4-f}$\Rightarrow$\ref{th-1-4-a}. Suppose that \ref{th-1-4-d} (resp. \ref{th-1-4-f}) holds. By Theorem~\ref{th-3-2} (resp. Theorem~\ref{newproof}), the function~$f$ admits a rational representation. Therefore, according to Lemma~\ref{lem-3-1}, it remains to show that $f$ is a real analytic function.

First assume that $n=2$. By Theorem~\ref{th-3-2} (resp.~Theorem~\ref{newproof}), there exist two polynomial functions $G,H\colon \R^2 \to \R$ such that the 
zero set $\ZC(H)$ is finite and $f(x)=G(x)/H(x)$ for all $x \in \R^2 \setminus \ZC(H)$. By Lemma~\ref{lem-2-8}, 
the function $f$ is real analytic in a neighborhood of every point in $\ZC(H)$. Hence $f$ is real analytic on $\R^2$, as required.

In the general case $n \geq 2$, let $Q$ be an affine $2$-plane in $\R^n$. We have just proved that the restriction $f|_Q$ is an analytic function. By \cite[Theorem~1]{bib9} (see also \cite{bib8}), the function~$f$ is real analytic on $\R^n$.
\end{proof}

For the proof of Theorem~\ref{th-1-5} we need the following special case of \cite[Theorem~1.3]{bib22}.

\begin{theorem}\label{th-3-3}
Let $X$ be an irreducible nonsingular algebraic set in $\R^n$, $f \colon U \to \R$ a~function defined on a nonempty open subset $U$ of $X$, and $d$ an integer satisfying $1 \leq\nobreak d \leq \dim X - 1$. Assume that $U$ has finitely many connected components, and the restriction of $f$ to every algebraic set in the collection $\SC_d(U)$ (defined in Notation~\ref{not-1-3}) is a regular function. Then the function $f$ admits a rational representation. Furthermore, there exist two polynomial functions $G,H \colon \R^n \to \R$ such that
\begin{equation*}
    \codim_U P \geq 2 \quad\text{and}\quad f(x) = \frac{G(x)}{H(x)} \quad \text{for all } x \in U \setminus P,
\end{equation*}
where $P\coloneqq U \cap \ZC(H)$.
\end{theorem}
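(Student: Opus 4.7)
Since Theorem~\ref{th-3-3} is stated as a special case of \cite[Theorem~1.3]{bib22}, the honest proof is to invoke that reference; the plan below outlines how one would proceed directly, following the same three-step pattern already used in our proofs of Theorems~\ref{newproof} and~\ref{th-3-2}: (a)~produce a nonempty open $W\subset U$ on which $f$ is regular, (b)~extend the rational representation $f=G/H$ from $W$ to all of $U\setminus\ZC(H)$, and (c)~show that the pole set $P=U\cap\ZC(H)$ has codimension at least $2$ in $U$. The new ingredients compared to Theorem~\ref{th-3-2} are that the test sets are now $d$-dimensional (Euclidean $d$-spheres, or disjoint unions of two such), and that the ambient space $X$ is an arbitrary irreducible nonsingular algebraic set rather than $\R^n$.

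For step (a), I would work in a local chart of $X$ near some point $a\in U$ and construct a large algebraic family of elements of $\SC_d(U)$ sweeping out a neighborhood of $a$: slice $X$ with varying affine $(d+1)$-planes meeting $U$ transversely, then intersect with Euclidean spheres of varying centers and radii. After small linear perturbations these slices land in $\SC_d(U)$, and on each the restriction of $f$ is regular by hypothesis. A Hartogs-type principle (as in \cite{bib9,bib10,bib22}) then forces $f$ to be Nash—hence, by Lemma~\ref{lem-3-1}, regular—on some nonempty open $W\subset U$, so $f|_W=G/H$ for coprime polynomials $G,H$ on $\R^n$.

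Step (b) repeats the argument used twice already above: given $x\in U\setminus\ZC(H)$, one picks $S\in\SC_d(U)$ passing through $x$ and meeting $W$ in a nonempty open set. When $x$ lies in a connected component of $U$ different from the one containing $W$, the two-sphere members of $\SC_d(U)$ from Notation~\ref{not-1-3}\ref{not-1-3-ii} are essential, since they can straddle two components of $U$ simultaneously; this is precisely why that clause is built into the definition. On such an $S$, both $f|_S$ and $(G/H)|_{S\setminus\ZC(H)}$ are regular and coincide on a nonempty open subset of $S$, hence agree throughout their common domain, so evaluation at $x$ gives $f(x)=G(x)/H(x)$.

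For step (c), I would argue by contradiction: if $\codim_U P=1$, coprimality of $G,H$ produces a nonsingular point $x\in P$ with $G(x)\neq 0$; one then finds $S\in\SC_d(U)$ through $x$ not contained in $P$. The restriction $f|_S$ would be regular while $(G/H)|_{S\setminus P}$ is unbounded near $x$, yielding the desired contradiction. The main obstacle throughout is the availability of sufficiently many members of $\SC_d(U)$ meeting the required transversality, containment, and (when $U$ is disconnected) span-both-components conditions; this is the genericity and parameter-count work carried out carefully in \cite{bib22}, which is why that theorem is quoted here rather than reproved from scratch.
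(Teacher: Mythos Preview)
Your proposal is correct: the paper gives no independent proof of Theorem~\ref{th-3-3}, introducing it as ``the following special case of \cite[Theorem~1.3]{bib22}'' and then simply quoting the statement, so invoking that reference is exactly the paper's approach. Your additional three-step sketch (mirroring Theorems~\ref{newproof} and~\ref{th-3-2}) is a reasonable outline of how such an argument would go, but the paper itself offers nothing beyond the citation.
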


\begin{proof}[Proof of Theorem~\ref{th-1-5}]
As we already know, it is sufficient to prove \ref{th-1-5-d}$\Rightarrow$\ref{th-1-5-a}. Suppose that \ref{th-1-5-d} holds. Our task is to demonstrate regularity of $f$ on $U$. It will be seen that the argument is somewhat shorter if the set $U$ has finitely many connected components. To begin with we show analyticity of $f$ on $U$.

\begin{case}\label{case-1}
First assume that $\dim X = 2$. Let $U_0$ be a connected component of $U$. By Theorem~\ref{th-3-3}, the restriction $f|_{U_0}$ admits a rational representation, hence there exist two polynomial functions $G,H \colon \R^n \to \R$ such that the set $P = U_0 \cap \ZC(H)$ is finite and the restrictions $g = G|_{U_0}$, $h = H|_{U_0}$ are real analytic functions on $U_0$ with
\begin{equation*}
    \ZC(h) = P \quad \text{and} \quad f(x) = \frac{g(x)}{h(x)} \quad \text{for all } x \in U_0 \setminus P.
\end{equation*}
By Proposition~\ref{prop-2-1}, the function $f$ is real analytic in a neighborhood of every point in~$P$. Consequently, $f|_{U_0}$ is a real analytic function. Therefore $f$ is real analytic on $U$, the connected component $U_0$ being arbitrary.
\end{case}

\begin{case}\label{case-2}
Now assume that $\dim X = m \geq 3$. Pick a point $a$ in $U$. By Lemma~\ref{lem-2-13}, there exists a linear map $\pi \colon \R^n \to \R^m$ for which the following hold:
\begin{iconditions}
\item the restriction $\pi|_X \colon X \to \R^m$ is a proper map with finite (some possibly empty) fibers;

\item the map $\pi|_X$ is transverse to $\pi(a)$.
\end{iconditions}
We can choose a constant $r > 0$ such that
\begin{equation*}
    (\pi|_X)^{-1} (B(\pi(a),r)) = U_1 \cup \cdots \cup U_l,
\end{equation*}
where $B(\pi(a), r) \subset \R^m$ is the open ball centered at $\pi(a)$ with radius $r$, the $U_i$ are pairwise disjoint open 
subsets of $X$, $\pi|_{U_i} \colon U_i \to B(\pi(a), r)$ is a real analytic diffeomorphism {for $i=1,\ldots,l$}, and $a \in U_1 \subset U$. 
Define the map $\rho \colon X \to \R^m$ by
\begin{equation*}
    \rho(x) = \frac 1 r (\pi(x) - \pi(a)) \quad \text{for all } x \in X,
\end{equation*}
and let $\B^m$ denote the open unit ball in $\R^m$. Then
\begin{equation*}
    \rho^{-1}(\B^m) = U_1 \cup \cdots \cup U_l
\end{equation*}
and the restriction $\rho|_{U_i} \colon U_i \to \B^m$ is a real analytic diffeomorphism for $i = 1, 
\ldots, l$. 

Now, let $\Sigma$ be a Euclidean $2$-sphere in $\B^m$ passing through the origin, that is, $\Sigma$ is a subset of~$\B^m$ of the form
\begin{equation*}
    \Sigma = \{y \in \R^m : \norm{y - y_0} = \norm{y_0}\} \cap V,
\end{equation*}
{where $y_0 \in V$,} $0 < \norm{y_0} < \frac 1 2$, and $V$ is a vector subspace of $\R^m$ of dimension $3$. By construction, the preimage $S = \rho^{-1}(\Sigma)$ is a nonsingular algebraic surface in $X$ with $a \in S$. Let $Y$ be the irreducible component of $S$ that contains $a$. By Case~\ref{case-1}, the restriction $f|_{Y \cap U}$ is a real analytic function. It follows that the restriction of the function $f \circ (\rho|_{U_1})^{-1} \colon \B^m \to \R$ to $\Sigma$ is a real analytic function. Since $\Sigma$ under consideration is arbitrary, the function $f \circ (\rho|_{U_1})^{-1}$ is real analytic on $\B^m$ by \cite[Theorem~2]{bib6}. Therefore the restriction $f|_{U_1}$ is a real analytic function, so $f$ is real analytic in a neighborhood of~$a$. Consequently, $f$ is real analytic on $U$, the point $a \in U$ being arbitrary.
\end{case}

Having established analyticity of $f$, we complete the proof as follows. Suppose that the set $U$ has finitely many 
connected components. Then, according to Theorem~\ref{th-3-3}, the function $f$ admits a rational representation, and hence it is regular by Lemma~\ref{lem-3-1}.
If $U$ is an arbitrary nonempty open set, then regularity of $f$ on $U$ follows from Lemma~\ref{lem-3-4} below.
\end{proof}

\begin{lemma}\label{lem-3-4}
Let $X$ be an irreducible nonsingular algebraic set in $\R^n$, $f \colon U \to \R$ a real analytic function defined on a nonempty open subset $U$ of $X$, and $d$ an integer satisfying $1 \leq d \leq \dim X - 1$. Assume that the restriction of $f$ to every algebraic set in the collection $\SC_d(U)$ (defined in Notation~\ref{not-1-3}) is a regular function. Then $f$ is a regular function on $U$.
\end{lemma}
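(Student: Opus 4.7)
The plan is to reduce to Theorem~\ref{th-3-3} applied on each connected component of $U$ separately. Since regularity of $f$ on $U$ is a pointwise property, it suffices to establish regularity of $f|_{U_0}$ for every connected component $U_0$ of $U$. Because $U$ is open in $X$ and $X$ is a topological manifold, each $U_0$ is itself open in $X$, and it has exactly one connected component, which in particular makes the finiteness-of-components hypothesis of Theorem~\ref{th-3-3} trivially satisfied.

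Next I would verify the inclusion $\SC_d(U_0) \subset \SC_d(U)$, which transfers the hypothesis of the lemma from $U$ to $U_0$. Any $S \in \SC_d(U_0)$ is a $d$-dimensional irreducible nonsingular algebraic set contained in $U_0 \subset U$ with a single connected component homeomorphic to the unit $d$-sphere; this configuration is admissible both under condition~\ref{not-1-3-i} (when $U$ is connected) and under condition~\ref{not-1-3-ii} (when $U$ is disconnected, since ``at most two components'' permits one). Hence by hypothesis, $f|_S$ is regular for every $S \in \SC_d(U_0)$.

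Applying Theorem~\ref{th-3-3} to $U_0$ then supplies polynomials $G,H \colon \R^n \to \R$ and a decomposition $f(x) = G(x)/H(x)$ on $U_0 \setminus \ZC(H)$, so $f|_{U_0}$ admits a rational representation. Since $f|_{U_0}$ is also real analytic by hypothesis, Lemma~\ref{lem-3-1} at once yields that $f|_{U_0}$ is regular, and taking the union over all connected components finishes the proof. There is no real obstacle here: the lemma is essentially a bookkeeping step to remove the finiteness-of-components assumption from Theorem~\ref{th-3-3}, and the only subtlety is the sphere-admissibility check just noted.
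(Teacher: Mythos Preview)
Your argument has a genuine gap in its very first sentence: regularity of $f\colon U\to\R$ is \emph{not} a componentwise property in the sense you use. According to the definition adopted in the paper, $f$ is regular at $a\in U$ only if there exist polynomials $\varphi,\psi$ with $\psi(a)\neq 0$ and $f(x)=\varphi(x)/\psi(x)$ for \emph{every} $x\in U\setminus\ZC(\psi)$, not merely for $x$ in the component of $a$. Knowing that $f|_{U_0}$ is regular (as a function with domain $U_0$) for each component $U_0$ therefore does not yield regularity of $f$ on $U$. The paper's Remark~\ref{rem-1-8}\ref{rem-1-8-3} gives the prototypical obstruction: the function equal to $0$ on one component $U_0$ and $1$ on $U\setminus U_0$ is real analytic and regular on every component, yet is not regular on $U$. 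Your argument, which uses only the single-sphere sets in $\SC_d(U_0)$, would apply verbatim to this function and produce a false conclusion.

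What the paper does is exactly the extra step your sketch skips. Starting from the rational representation $f=\varphi/\psi$ on one fixed component $U_0$ (obtained just as you describe via Theorem~\ref{th-3-3} and Lemma~\ref{lem-3-1}), it must propagate the identity $\psi f=\varphi$ to \emph{all} of $U$. This is where the two-component members of $\SC_d(U)$ become essential: if $\psi f\neq\varphi$ at some point of another component $U_1$, one invokes \cite[Lemma~2.4]{bib22} to produce an irreducible $S\in\SC_d(U)$ with one sphere in $U_0$ and one in $U_1\cap\{\psi f\neq\varphi\}$; regularity of $f|_S$ and irreducibility of $S$ then force $(\psi f)|_S=\varphi|_S$, a contradiction. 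Your proposal never uses these two-component test sets, which is precisely why it cannot bridge components.
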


\begin{proof}
Since $f$ is a real analytic function, by Lemma~\ref{lem-3-1}, it is sufficient to prove that $f$~admits a rational representation ($U$ may have infinitely many connected components, so Theorem~\ref{th-3-3} is not directly applicable). Suppose that $U$ is disconnected, and let $U_0$ be a connected component of $U$. By Theorem~\ref{th-3-3}, the restriction $f|_{U_0}$ is a regular function, and hence there exist two polynomial functions $\varphi, \psi \colon \R^n \to \R$ such that
\begin{equation*}
    U_0 \subset \R^n \setminus Z(\psi) \quad\text{and}\quad f(x) = \frac{\varphi(x)}{\psi(x)} \quad \text{for all } x \in U_0.
\end{equation*}
It remains to show that
\begin{equation*}
    \psi(x)f(x) = \varphi(x) \quad\text{for all } x\in U.
\end{equation*}
Suppose this is not the case. Then the set
\begin{equation*}
    W = \{x \in U : \psi(x)f(x) \neq \varphi(x)\}
\end{equation*}
is nonempty and open. Choose a connected component $U_1$ of $U$ so that $U_1 \cap W \neq \varnothing$. Let $D_0, D_1$ be two disjoint closed balls in $\R^d$ and let
\begin{equation*}
    h \colon \Omega = D_0 \cup D_1 \to X
\end{equation*}
be a $\Cinfty$ embedding with $h(D_0) \subset U_0$ and $h(D_1) \subset U_1 \cap W$. Let $\partial \Omega$ be the boundary of~$\Omega$. By \cite[Lemma~2.4]{bib22}, the embedding $h$ can be chosen so that $S \coloneqq h(\partial \Omega)$ belongs to the collection of algebraic sets $\SC_d(U)$. (To invoke \cite[Lemma~2.4]{bib22}, we view $X$ as a Zariski open subset of the set of real points of some nonsingular projective (complex) algebraic variety defined over $\R$, which can be achieved by means of resolution of singularities \cite{bib14} or \cite{bib15}.) Note that $S \cap U_0 \neq \varnothing$ and $S \cap W \neq \varnothing$. By assumption, the restriction $f|_S$ is a regular function. Since the regular functions $(\psi f)|_S$ and $\varphi|_S$ agree on the nonempty open subset $S \cap U_0$ of $S$, we get $(\psi f)|_S = \varphi|_S$, the algebraic set $S$ being irreducible. This gives a contradiction because $S \cap W \neq \varnothing$.
\end{proof}

\subsection{Restrictions to algebraic surfaces}\label{subsec:3.2}

Next we investigate regularity of functions by means of restrictions to nonsingular algebraic surfaces.

\begin{proof}[Proof of Theorem~\ref{th-1-6}]
It is sufficient to prove that \ref{th-1-6-b} implies \ref{th-1-6-a}. Suppose that \ref{th-1-6-b}~holds. By \cite[Theorem~2]{bib6}, $f$ is a real analytic function on $\R^n$. Evidently, it follows from~\ref{th-1-6-b} that the restriction of $f$ to every Euclidean circle in $\R^n$ is a regular function, and hence $f$~admits a rational representation in view of Theorem~\ref{th-3-2}. In conclusion, $f$ is regular on~$\R^n$ by Lemma~\ref{lem-3-1}.
\end{proof}

The proof of Theorem~\ref{th-1-7} requires some additional preparation. Let $X$ be an algebraic set in $\R^n$ and let $Y$ be an algebraic subset of $X$. By a polynomial function on $X$ we mean the restriction of a polynomial function on $\R^n$. Let $\PC(X)$ be the ring of all polynomial functions on $X$ and let $I_{\PC(X)}(Y)$ be the ideal of $\PC(X)$ consisting of all polynomial functions vanishing on $Y$. If $X$ is nonsingular, let $\Cinfty(X)$ be the ring of all real-valued $\Cinfty$ functions on $X$ and let $I_{\Cinfty(X)}(Y)$ be the ideal of $\Cinfty(X)$ consisting of all $\Cinfty$ functions vanishing on~$Y$.

The next two lemmas are variants of some results established in \cite[\S1]{bib27}.

\begin{lemma}\label{lem-3-5}
Let $X$ be a nonsingular algebraic set in $\R^n$ and let $Z$ be a faithful algebraic subset of $X$. Then the ideals $I_{\PC(X)}(Z) \Cinfty(X)$ and $I_{\Cinfty(X)}(Z)$ of the ring $\Cinfty(X)$ are equal.
\end{lemma}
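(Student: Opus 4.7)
The inclusion $I_{\PC(X)}(Z)\Cinfty(X) \subseteq I_{\Cinfty(X)}(Z)$ is immediate, since polynomial functions vanishing on $Z$ are in particular smooth functions vanishing on $Z$. For the reverse inclusion, my plan is to establish the equality stalk-wise at each point $b \in X$ and then glue using a smooth partition of unity on $X$. Concretely, given $f \in I_{\Cinfty(X)}(Z)$, it suffices to exhibit, for every $b \in X$, an open neighborhood $U_b$ of $b$ in $X$ and a finite representation of $f|_{U_b}$ as a $\Cinfty$-linear combination of elements of $I_{\PC(X)}(Z)$.

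For $b \in X \setminus Z$ this is straightforward: as $Z$ is closed in $X$ (being algebraic) and $b \notin Z$, there is some $p \in I_{\PC(X)}(Z)$ with $p(b) \neq 0$; then $f/p$ is smooth near $b$ and $f = p \cdot (f/p)$ has the required form in a small neighborhood of $b$.

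For $b \in Z$ I would apply the faithfulness hypothesis in the form recalled in Section~\ref{sec:1} via \cite[Proposition~4]{bib26}: denoting by $\OC^{\R}_{X,b}$ the local ring of real analytic function-germs on $X$ at $b$, faithfulness at $b$ is exactly the statement that
\begin{equation*}
I_{\OC^{\R}_{X,b}}(Z_b) = I_{\PC(X)}(Z)\,\OC^{\R}_{X,b}.
\end{equation*}
It then remains to establish the analogous equality at the smooth level,
\begin{equation*}
I_{\Cinfty(X)_b}(Z_b) = I_{\OC^{\R}_{X,b}}(Z_b)\,\Cinfty(X)_b,
\end{equation*}
which combined with the previous display yields $f_b \in I_{\PC(X)}(Z)\,\Cinfty(X)_b$, completing the germ-wise argument.

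The main obstacle is precisely this last identification, that is, equating the ideal of smooth germs vanishing on a real analytic subvariety with the smooth extension of the ideal of analytic germs vanishing on it. This is a Malgrange--Tougeron type assertion; it follows from \L{}ojasiewicz-type divisibility estimates for real analytic functions together with the flatness relation between the rings of analytic and smooth germs (the faithful flatness between analytic and regular germs used in the proof of Lemma~\ref{lem-3-1} via \cite[Chap.~III, Proposition~4.10]{bib24} being part of the same circle of ideas), and it is the smooth counterpart of the quasi-regular arguments carried out in \cite[\S1]{bib27} that the authors indicate they are adapting.
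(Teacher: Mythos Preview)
Your strategy is the paper's: localize, use faithfulness via \cite[Proposition~4]{bib26} to identify the analytic vanishing ideal of $Z_b$ with $I_{\PC(X)}(Z)\,\OC^{\R}_{X,b}$, pass from analytic to smooth germs, and globalize by a partition of unity. The paper also works in $\R^n$ rather than on $X$ and then restricts, but that is cosmetic.

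The one place where your write-up is too loose is the passage from analytic to smooth vanishing ideals. The paper invokes \cite[Chap.~VI, Theorem~3.10]{bib24} (not Chap.~III, which concerns formal versus convergent series and is the ingredient behind Lemma~\ref{lem-3-1}, a different statement). The hypothesis of that theorem is that $Z$ be \emph{coherent} as a real analytic set, and the paper observes that faithfulness supplies this: the analytic ideal of $Z$ is generated at every point by the same finite family of polynomials, so the ideal sheaf is globally finitely generated, hence coherent. Flatness of $\Cinfty$ over $\OC$ by itself does not yield the equality $I_{\Cinfty_b}(Z_b)=I_{\OC_b}(Z_b)\Cinfty_b$, since the question is not whether extension of a given ideal behaves well but whether the smooth vanishing ideal coincides with the extended analytic one; for non-coherent real analytic sets this can fail, which is precisely why the faithfulness hypothesis is needed (as the paper remarks right after the proof).
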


\begin{proof}
Let $x \in Z$ and let $Z_x$ be the germ of $Z$ at $x$. Denote by $\OC_x$ (resp. $\Cinfty_x$) the ring of all analytic (resp. $\Cinfty$) function-germs $(\R^n,x) \to \R$. As recalled in Section~\ref{sec:1}, by \cite[Proposition~4]{bib26}, if a function-germ $\varphi \in \OC_x$ vanishes on $Z_x$, then $\varphi \in I_{\PC(\R^n)}(Z)\OC_x$. In particular, it follows that $Z$ regarded as a real analytic set is coherent. Hence, according to \cite[Chap.~VI, Theorem~3.10]{bib24}, if a function-germ $\psi \in \Cinfty_x$ vanishes on $Z_x$, then ${\psi \in I_{\PC(\R^n)}(Z)\Cinfty_x}$. Since every $\Cinfty$ function on $X$ is the restriction of a $\Cinfty$ function on~$\R^n$, using partition of unity, we get $I_{\Cinfty(X)}(Z) \subset I_{\PC(X)}(Z)\Cinfty(X)$. The reversed inclusion is obvious.
\end{proof}

In view of \cite[Chap.~VI, Theorem~3.10]{bib24}, Lemma~\ref{lem-3-5} does not hold if the assumption on faithfulness of $Z$ is omitted.

\begin{lemma}\label{lem-3-6}
Let $X$ be a compact nonsingular algebraic set in $\R^n$ and let $Z$ be a faithful algebraic subset of $X$. Let $\varphi \colon X \to \R$ be a $\Cinfty$ function vanishing on $Z$. Then there exists a polynomial function $\psi \colon X \to \R$, arbitrarily close to $\varphi$ in the $\Cinfty$ topology, such that $\psi$ vanishes on $Z$.
\end{lemma}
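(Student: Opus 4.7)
The plan is to combine Lemma~\ref{lem-3-5} with the density of polynomial functions in $\Cinfty(X)$ (in the $\Cinfty$ topology) which holds because $X$ is compact. Since $\varphi$ vanishes on $Z$, we have $\varphi \in I_{\Cinfty(X)}(Z)$, and Lemma~\ref{lem-3-5} combined with faithfulness of $Z$ gives
\[
I_{\Cinfty(X)}(Z) = I_{\PC(X)}(Z)\Cinfty(X).
\]
Hence there is a finite representation
\[
\varphi = \sum_{i=1}^{k} p_i g_i,
\]
where $p_1,\ldots,p_k \in I_{\PC(X)}(Z)$ and $g_1,\ldots,g_k \in \Cinfty(X)$.

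Next, I would approximate each $g_i$ by a polynomial function in the $\Cinfty$ topology on $X$. Compactness of $X$ and the presence of a tubular neighborhood in $\R^n$ allow one to extend each $g_i$ to a $\Cinfty$ function $\tilde g_i$ on some open neighborhood $W$ of $X$ in $\R^n$; a standard application of the Weierstrass approximation theorem in the $\Cinfty$ topology on a compact neighborhood of $X$ contained in $W$ then yields polynomial functions $q_i$ on $\R^n$ with $g_i - q_i|_X$ arbitrarily small in the $\Cinfty$ topology on $X$. Setting
\[
\psi := \sum_{i=1}^{k} p_i q_i,
\]
we get a polynomial function on $X$ that vanishes on $Z$ (each $p_i$ does), while
\[
\varphi - \psi = \sum_{i=1}^{k} p_i (g_i - q_i|_X)
\]
is small in the $\Cinfty$ topology, because the $p_i$ (together with all their derivatives) are bounded on the compact set $X$.

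The only delicate point is the approximation of each $g_i$ by polynomials in the $\Cinfty$ topology on $X$; once this standard density is invoked, the proof reduces to a straightforward multiplication in the ideal $I_{\PC(X)}(Z)$. In particular, the main substantive input is Lemma~\ref{lem-3-5}, which converts the purely smooth membership of $\varphi$ in $I_{\Cinfty(X)}(Z)$ into an \emph{algebraic} finite decomposition with polynomial generators; the compactness of $X$ then does the rest.
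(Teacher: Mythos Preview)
Your proof is correct and follows essentially the same route as the paper: use Lemma~\ref{lem-3-5} to write $\varphi = \sum p_i g_i$ with $p_i \in I_{\PC(X)}(Z)$ and $g_i \in \Cinfty(X)$, approximate each $g_i$ by a polynomial $q_i$ via Weierstrass, and set $\psi = \sum p_i q_i$. The only difference is that you spell out the tubular-neighborhood extension needed for the $\Cinfty$ Weierstrass approximation, whereas the paper simply invokes the theorem.
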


\begin{proof}
By Lemma~\ref{lem-3-5}, $\varphi$ can be written as $\varphi = \varphi_1 P_1 + \cdots + \varphi_s P_s$, where the $P_i$ are polynomial functions on $X$ vanishing on $Z$, and the $\varphi_i$ are $\Cinfty$ functions on $X$. By the Weierstrass approximation theorem, each $\varphi_i$ can be approximated in the $\Cinfty$ topology by a polynomial function $\psi_i$ on $X$. Then $\psi = \psi_1 P_1 + \cdots \psi_s P_s$ is a polynomial function on~$X$, close to $\varphi$ in the $\Cinfty$ topology and vanishing on $Z$.
\end{proof}

\begin{proof}[Proof of Theorem~\ref{th-1-7}]
Obviously, \ref{th-1-7-a} implies \ref{th-1-7-b}. Suppose that \ref{th-1-7-b} holds. Our goal is to demonstrate regularity of 
$f$ on $U$. By the resolution of singularities theorem (see \cite{bib14} or~\cite{bib15}) every nonsingular real algebraic set 
is biregularly isomorphic to a Zariski open subset of a compact nonsingular real algebraic set, so we may assume that the real 
algebraic set $X$ is compact. 

Pick a point $a \in U$. First we show real analyticity of $f$ in a neighborhood of $a$ in $U$. Set $m = \dim X$, and let $\rho \colon X \to \R^m$ be the regular map as in Case~\ref{case-2} of the proof of Theorem~\ref{th-1-5}. Recall that
\begin{equation*}
    \rho^{-1}(\B^m) = U_1 \cup \cdots \cup U_l,
\end{equation*}
where $\B^m$ is the open unit ball in $\R^m$, the $U_i$ {are pairwise disjoint} open subsets of $X$, each $\rho|_{U_i} \colon U_i \to \B^m$ is a real analytic diffeomorphism, $a \in U_1 \subset U$, and $\rho(a) = 0$. Let $\Sigma \subset \B^m$ be a Euclidean $2$-sphere passing thorough the origin,
\begin{equation*}
    \Sigma = \{y \in \R^m : \norm{y-y_0} = \norm{y_0}\} \cap V,
\end{equation*}
where $V$ is a vector subspace of $\R^m$ of dimension $3$ and $y_0 \in V$ with $0 < \norm{y_0} < \frac 1 2$. 
The preimage $S = \rho^{-1}(\Sigma)$ is a nonsingular algebraic surface in $X$ passing through $a$. Let $S_a$ be the connected 
component of $S$ containing $a$. By construction, the real analytic diffeomorphism $\rho|_{U_1} \colon U_1 \to \B^m$ 
transforms $S_a$ onto the sphere $\Sigma$. 

Set
\begin{equation*}
    D = (\rho|_{U_1})^{-1} (\{y \in \R^m : \norm{y-y_0} \leq \norm{y_0}\} \cap V).
\end{equation*}
Then $D \subset X$ is a compact $\Cinfty$ submanifold diffeomorphic to the closed ball $\bar\B^3$, with boundary
$\partial D = S_a$. Clearly, the normal bundle to $D$ in $X$ is trivial. Therefore, by \cite[Theorem~1.12]{bib7}, there 
exists a $\Cinfty$ map $\varphi \colon X \to \R^{m-2}$ transverse to the origin $0 \in \R^{m-2}$ and such that 
$S_a = \varphi^{-1}(0)$. 

Now, fix a positive integer $k$, and let $C$ be an algebraic curve in $S$ that belongs either to 
the collection $\SC_1(S_a)$ or the collection $\FC^k(S_a)$ (defined in Notation~\ref{not-1-3}). In particular, $C$ is a 
faithful algebraic subset of $X$. Hence, by Lemma~\ref{lem-3-6}, there exists a polynomial map $\psi \colon X \to \R^{m-2}$, 
arbitrarily close to $\varphi$ in the $\Cinfty$ topology, such that $C \subset Y \coloneqq \psi^{-1}(0)$. 
If $\psi$ is sufficiently close to $\varphi$, then $\psi$ is transverse to $0 \in \R^{m-2}$, so $Y$ is a nonsingular 
algebraic surface in $X$. Furthermore, in view of \cite[p.~51, Theorem~20.2]{bib1}, the submanifolds 
$\varphi^{-1}(0)$ and $\psi^{-1}(0)$ are diffeomorphic, and hence $Y$ is diffeomorphic to the unit $2$-sphere. 
By \ref{th-1-7-b}, $f|_Y$ is a regular function, which in turn implies that $f|_C$ is a regular function. 
Consequently, by Theorem~\ref{th-1-5} (with $X$ {equal to the irreducible component of $S$ containing $S_a$} and $U=S_a$),  $f|_{S_a}$ is a regular function. Therefore the restriction of the function $f \circ (\rho|_{U_1})^{-1} \colon \B^m \to \R$ to $\Sigma$ is a real analytic function. Since $\Sigma$ is arbitrary, $f \circ (\rho|_{U_1})^{-1}$ is a real analytic function by \cite[Theorem~2]{bib6}. Thus $f|_{U_1}$ is real analytic, hence $f$ is real analytic in a neighborhood of $a$. In conclusion, $f$ is real analytic on $U$, the point $a \in U$ being arbitrary.

Suppose that the set $U$ has finitely many connected components. Then, by Theorem~\ref{th-3-3}, the function $f$ admits a rational representation, and hence $f$ is regular, being real analytic.
If $U$ is an arbitrary open set, then regularity of $f$ on $U$ follows from Lemma~\ref{lem-3-4}.
\end{proof}

\phantomsection
\addcontentsline{toc}{section}{\refname}


\begin{thebibliography}{99}
\bibitem{bib1} R.~Abraham and J.~Robbin, Transversal Mappings and Flows, Benjamin, New York (1967).

\bibitem{bib2} M.~Bilski, Approximation of maps into spheres by piecewise-regular maps of class~$C^k$, Math. Ann. 378 (2020), 763--776.

\bibitem{bib3} M.~Bilski and W.~Kucharz, Approximation by piecewise-regular maps, Adv. Math. 375 (2020) 107350, 30 pp.

\bibitem{bib4} J.~Bochnak, M.~Coste and M.-F.~Roy, Real Algebraic Geometry, Ergeb. Math. Grenzgeb. Folge 3, vol. 36, Springer, Berlin (1998).

\bibitem{bib5} J.~Bochnak, J.~Gwoździewicz and W.~Kucharz, Criteria for algebraicity of analytic functions, Int. Math. Res. Notices 2021, Issue 5, 3314--3330.

\bibitem{bib6} J.~Bochnak, J.~Koll\'ar and W.~Kucharz, Checking real analyticity on surfaces, J.~Math. Pures Appl. 133 (2020), 167--171.

\bibitem{bib7} J.~Bochnak and W.~Kucharz, Complete intersections in differential topology and analytic geometry, Boll. Un. Mat. Ital. (7) 10-B (1996), 1019--1041.

\bibitem{bib8} J.~Bochnak and J.~Siciak, Analytic functions in topological vector spaces, mim\-e\-o\-graphed preprint, IHES (1971).

\bibitem{bib9} J.~Bochnak and J.~Siciak, A characterization of analytic functions of several real variables, Ann. Polon. Math. 123 (2019), 9--13.

\bibitem{bib10} S.~Bochner and W.~Martin, Several Complex Variables, Princeton University Press, Princeton (1948).


\bibitem{bib12} D.~Eisenbud, Commutative Algebra with a View Toward Algebraic Geometry, Graduate Texts in Math. vol. 150, Springer, New York (2013).

\bibitem{bib13} G.~Fichou, J.~Huisman, F.~Mangolte and J.-P.~Monnier, Fonctions r\'egulues, J.~Reine Angew. Math. 718 (2016), 103--151.



\bibitem{FMQ21} G.~Fichou, J.-P.~Monnier, R.~Quarez, Integral closures in real algebraic geometry. J. Algebraic Geom. 30 (2021), 253--285.



\bibitem{bib14} H.~Hironaka, Resolution of singularities of an algebraic variety over a field of characterisitc zero, Ann. of Math. 79 (1964), 109--326.

\bibitem{bibHi} M. W. Hirsch, Differential Topology, Springer-Verlag, New York, Heidelberg, Berlin (1976) 

\bibitem{bib15} J.~Koll\'ar, Lectures on Resolution of Singularities, Ann. of Math. Studies, vol. 166, Princeton University Press, Princeton (2007).

\bibitem{bib16} J.~Koll\'ar, W.~Kucharz and K.~Kurdyka, Curve-rational functions, Math. Ann. 370 (2018), 39--69.

\bibitem{bib17} J.~Koll\'ar and K.~Nowak, Continuous rational functions on real and $p$-adic varieties, Math. Z. 279 (2015), 85--97.

\bibitem{bib18} W.~Kucharz, Piecewise-regular maps, Math. Ann. 372 (2018), 1545--1574.

\bibitem{bib19} W.~Kucharz, On continuous rational functions, Singularities---Kagoshima 2017, Proceedings of the 5th Franco-Japanese-Vietnamese Symposium on Singularities, \mbox{pp.~41--68}, World Scientific Publishing (2020).

\bibitem{bib20} W.~Kucharz and K.~Kurdyka, Stratifies-algebraic vector bundles, J.~Reine Angew. Math. 745 (2018), 105--154.

\bibitem{bib21} W.~Kucharz and K.~Kurdyka, From continuous rational to regulous functions, Proc. Inter. Congr. Math.---Rio de Janeiro 2018, Vol. II, Invited Lectures, 719--747, World Sci. Publ., Hackensack, NJ, 2018.

\bibitem{bib22} W.~Kucharz and K.~Kurdyka, Rational representation of real functions, Pure Appl. Math. Q. 17 (2021), 249--268.

\bibitem{bib23} W.~Kucharz, K.~Kurdyka and A. El-Siblani, Separately Nash and arc-Nash functions over real closed fields, Bull. London Math. Soc. 53 (2021), 426--441.

\bibitem{bib24} B.~Malgrange, Ideals of Differentiable Functions, Oxford University Press (1966).

\bibitem{Man2020} F.~Mangolte, Real algebraic varieties. Springer Monographs in Mathematics, Springer, Cham, (2020).

\bibitem{bib25} R.~Narasimhan, Introduction to the Theory of Analytic Spaces, Lect. Notes in Math. 25, Springer, Berlin (1966).

\bibitem{bib26} J.-P.~Serre, G\'eom\'etrie alg\'ebrique et g\'eom\'etire analytique, Ann. Inst. Fourier 6 (1955-56), 1--42.

\bibitem{bib27} A.~Tognoli, Su una congettura di Nash, Ann. Scuola Norm. Sup. Pisa, Sci. Fis. Mat. (3) 27 (1973), 167--187.
\end{thebibliography}
\end{document}